\newcommand{\bel}[1]{\begin{equation}\label{#1}}
\newcommand{\be}{\begin{equation}}
\newcommand{\CCD}{\mathrm{CD}}
\newcommand{\ba}{\begin{eqnarray}}
\newcommand{\ea}{\end{eqnarray}}
\newcommand{\qe}{\end{equation}}
\newcommand{\R}{{\mathbb R}}
\newcommand{\N}{{\mathbb N}}
\newcommand{\Z}{{\mathbb Z}}
\newcommand{\y}{(\Omega)}
\newcommand{\BG}{\mathcal{BG}}
\newcommand{\eg}{\begin{example}}
\newcommand{\egd}{\end{example}}
\newcommand{\tm}{\begin{thm}}
\newcommand{\tmd}{\end{thm}}
\newcommand{\co}{\begin{coro}}
\newcommand{\cod}{\end{coro}}
\newcommand{\enu}{\begin{enumerate}}
\newcommand{\enud}{\end{enumerate}}
\newcommand{\rmk}{\begin{rem}}
\newcommand{\rmkd}{\end{rem}}
\theoremstyle{theorem}
\newtheorem{thm}{Theorem}[section]
\newtheorem{prop}[thm]{Proposition}
\theoremstyle{example}
\newtheorem{example}[thm]{Example}
\newtheorem{coro}[thm]{Corollary}
\theoremstyle{lemma}
\newtheorem{lemma}[thm]{Lemma}
\theoremstyle{definition}
\newtheorem{defi}[thm]{Definition}
\theoremstyle{proof}
\theoremstyle{remark}
\newtheorem{rem}[thm]{Remark}
\theoremstyle{remark}
\begin{document}

\title[Graphs with nonnegative curvature outside a finite subset]{Graphs with nonnegative curvature outside a finite subset, harmonic functions and number of ends}

\author{Bobo Hua}
\address{Bobo Hua: School of Mathematical Sciences, LMNS,
Fudan University, Shanghai 200433, China; Shanghai Center for
Mathematical Sciences, Fudan University, Shanghai 200438,
China.}
\email{bobohua@fudan.edu.cn}

\author{Florentin M\"unch}
\address{Florentin M\"unch: Max Planck Institute for Mathematics in the Sciences, Leipzig 04103, Germany}
\email{florentin.muench@mis.mpg.de}
\begin{abstract}
We study graphs with nonnegative Bakry-\'Emery curvature or Ollivier curvature outside a finite subset. For such a graph, via introducing the discrete Gromov-Hausdorff convergence we prove that the space of bounded harmonic functions is finite dimensional, and as a corollary the number of non-parabolic ends is finite. 
\end{abstract}

\maketitle




\section{Introduction}

For a complete, noncompact Riemannian manifold with nonnegative Ricci curvature, Yau \cite{Yau75} proved that the Liouville theorem for positive harmonic functions, i.e. any  positive harmonic function is constant. Later, Cheng and Yau \cite{ChengYau75} proved a quantitative gradient estimate for positive harmonic functions, see a parabolic version by Li and Yau \cite{LiYau86}.
 Liouville theorems for harmonic functions have received much attention in the literature, e.g. \cite{KaimanovichVershik83,Sullivan83,Anderson83,LiTam87,Lyons87,Grigoryan90,Benjamini91,Grigoryan91,Saloff92,Kaimanovich96,WangFY02,Erschler04,Woess09,Li12,Brighton13}.
 
Liouville type theorems for harmonic functions have been generalized to manifolds with nonnegative Ricci curvature outside a compact set. Let $M$ be such a manifold. Donnelly \cite{Donnelly86}  first proved that the space of bounded harmonic functions on $M$ is finite dimensional. Cheng \cite{Chengpreprint} and Li and Tam \cite[Theorem~3.2]{LiTam92} gave a quantitative estimate of the dimension
of bounded harmonic functions via the dimension of $M$, the
diameter of the set where $M$ has negative Ricci curvature, and the lower
bound of the Ricci curvature on $M,$ see e.g. \cite[Theorem~7.4]{Li12}.

In this paper, we study bounded harmonic functions on graphs with nonnegative curvature outside a finite subset of vertices (or edges). We recall the setting of weighted graphs. Let $(V,E)$ be a locally finite, simple, undirected graph. Two vertices $x,y$ are called neighbours, denoted by $x\sim y$, if there is an edge connecting $x$ and $y,$ i.e. $\{x,y\}\in E.$ For any $x\in V,$ we denote by
$\deg(x)=\sharp\{y\in V: y\sim x\}$ the combinatorial degree of the vertex $x.$ A graph is called connected if for any $x,y\in V$ there is a path $\{x_i\}_{i=0}^n\subset V$ connecting $x$ and $y,$ i.e. 
$$x = z_0 \sim. . . \sim z_n = y.$$ In this paper, we always consider connected graphs.
We denote by $$d(x, y) :=
\inf\{n | x = z_0 \sim. . . \sim z_n = y\}$$ the combinatorial graph distance between vertices $x$ and $y.$ For any $R\in \N,$ we write $$B_R(x):=\{y\in V: d(y,x)\leq R\}\ (\mathrm{resp}.\ S_R(x):=\{y\in V: d(y,x)=R\})$$ for the ball (resp. the sphere) of radius $R$ centered at $x.$ 
Let $$w: E\to (0,\infty),\quad \{x,y\}\mapsto w(x,y)=w(y,x),$$ be an edge weight function, and $$m: V\to (0,\infty),\quad x\mapsto m(x),$$ be a vertex weight function. We call the quadruple $G=(V,E,m,w)$ a \emph{weighted graph}.

For a weighted graph $G$ and any function $f:V\to \R,$ the Laplace operator $\Delta$ is defined as
$$\Delta f (x):= \sum_{y\in V:y\sim x}\frac{w(x,y)}{m(x)}\left(f(y)-f(x)\right), \quad\forall x\in V.$$
{For $\Omega\subset V,$ a function $f$ on $V$ is called harmonic (resp. superharmonic, subharmonic) on $\Omega$ if $\Delta f=0$ (resp. $\Delta f\leq 0, \Delta f\geq 0$) on $\Omega.$  We denote by $\mathcal{H}_0(G)$ the space of bounded harmonic functions on $V$ of $G.$}

We introduce the curvature notions on graphs. Let $K\in \R$ and $n\in(0,\infty].$ As is well-known, for a Riemannian manifold $M,$ the Ricci curvature is bounded below by $K$ and the dimension is bounded above by $n,$ if and only if
\begin{equation}\label{eq:eqb}\frac12\Delta_M |\nabla f|^2\geq \frac{1}{n}(\Delta f)^2+\langle \nabla f,\nabla \Delta_M f\rangle+K|\nabla f|^2,\quad\forall \ f\in C_c^\infty(M),\end{equation} where $\Delta_M$ is the Laplace-Beltrami operator on $M$ and $\nabla\cdot$ is the gradient of a function.
For a general Markov semigroup, Bakry and \'Emery \cite{BakryEmery85,Bakry87,BakryGentilLedoux} introduced the $\Gamma$-calculus, and defined the curvature dimension condition mimicking \eqref{eq:eqb}, denoted by $\CCD(K,n).$
For weighted graphs, this condition is called the Bakry-\'Emery curvature condition, introduced by \cite{Elw91,Schmuck96,LinYau10} independently, see Subsection~\ref{subsec:curvature}.
It is proved in \cite{hua2019Liouville} that the Liouville theorem for bounded harmonic functions holds for a weighted graph satisfying the $\CCD(0,\infty)$ condition.

Ollivier \cite{Ollivier09} introduced another curvature notion on graphs via the optimal transport, which was later modified by \cite{LinLuYau11,munch2019Ollivier}. In this paper, we call the modified curvature notion the Ollivier curvature on graphs, see Subsection~\ref{subsec:curvature}. {The Ollivier curvature is closely related to the Forman curvature for cell complexes, see \cite{JostMun21,tee2021enhanced}, and \cite{forman2003bochner} for Forman's original work.} The Liouville theorem for bounded harmonic functions on a weighted graph satisfying nonnegative Ollivier curvature was proved by \cite{jost2019liouville,munch2019non}. 
It was used by Salez in \cite{salez2022sparse} to show that graphs with non-negative curvature cannot be expanders, and this result even allows negative curvature on a ball of fixed radius.

Since the above curvature notions are local conditions, it is natural to consider a weighted graph with nonnegative curvature outside a finite subset in the sense of either Bakry-\'Emery or Ollivier.
However, the arguments for proving Liouville theorems in \cite{hua2019Liouville,jost2019liouville,munch2019non} do not work in this setting, since the global information of the graph, i.e.  nonnegative curvature everywhere, is crucially used. {It arises a common difficulty for semigroup approaches to analyze a space involving some negative curvature, see \cite{Paeng12,munch2018perpetual,liu2019distance,munch2020spectrally,Mun22F}.} 
Even worse, the bounded Liouville property clearly fails when allowing some negative curvature as one can see from gluing together two copies of $\Z^3$ at a single vertex.
To circumvent the difficulty, we introduce a discrete analog of the Gromov-Hausdorff convergence. In metric geometry, Gromov \cite{Gromov81} initiated the Gromov-Hausdorff convergence for metric spaces, which extends compactness arguments to a very general setting, see e.g. \cite{Burago01}. In this paper, we modify the Gromov-Hausdorff convergence for weighted graphs equipped with combinatorial distances, and prove the Liouville type results for bounded harmonic functions on graphs with nonnegative curvature outside a finite subset. See  \cite{BenSchramm01,AldousLyons07} for other related convergence notions on graphs.

\begin{defi}\label{def:bg1} We say that a weighted graph $G=(V,E,m,w)$ has \emph{bounded geometry} if there is a positive constant $C$ such that 
\begin{equation}\deg(x)\leq C,\ \forall x\in V,
\end{equation}
\begin{equation}C^{-1}\leq m(x)\leq C,\ \forall x\in V,
\end{equation} and 
\begin{equation}C^{-1}\leq w(x,y)\leq C,\ \forall x,y\in V, x\sim y.
\end{equation} We denote by $\mathcal{BG}(C),$ $\mathcal{BG}$ in short if the constant $C$ is clear in the context, the class of graphs satisfying the above conditions.
\end{defi}

The following are the main results of the paper.
\begin{thm}\label{thm:mainBE} Let $G=(V,E,m,w)$ be a weighted graph with bounded geometry. If $G$ satisfies $\CCD(0,\infty)$ outside $B_{R_0}(x_0)$ for some $x_0\in V$ and $R_0\in \N,$ see Definition~\ref{def:outside}, then 
$$\dim \mathcal{H}_0(G)\leq \sharp S_{R_0+1}(x_0)<\infty,$$ where $\sharp(\cdot)$ denotes the cardinality of a set.
\end{thm}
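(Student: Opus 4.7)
Proof plan. The plan is to show that the restriction map
$$\rho:\mathcal{H}_0(G)\longrightarrow \R^{S_{R_0+1}(x_0)},\qquad f\mapsto f|_{S_{R_0+1}(x_0)},$$
is injective, which immediately gives $\dim\mathcal{H}_0(G)\leq \sharp S_{R_0+1}(x_0)$. Thus I assume $f\in\mathcal{H}_0(G)$ with $f|_{S_{R_0+1}(x_0)}=0$ and aim at $f\equiv 0$ on $V$.

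The first step is a finite-volume maximum principle on the ball $B_{R_0+1}(x_0)$. Every $x\in B_{R_0}(x_0)$ has all its neighbors inside $B_{R_0+1}(x_0)$ and $f$ is harmonic at $x$; iterating the strong maximum principle inward from the boundary $S_{R_0+1}(x_0)$, on which $f$ vanishes, yields $f\equiv 0$ on $B_{R_0+1}(x_0)$ (and in particular on $B_{R_0}(x_0)$).

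The second step is to propagate the vanishing to all of $V$; here the discrete Gromov--Hausdorff framework set up earlier in the paper is essential, together with the Liouville theorem for $\CCD(0,\infty)$ graphs from \cite{hua2019Liouville}. Suppose for contradiction that $M:=\sup_V f>0$ (the case $\inf_V f<0$ is symmetric). If $M$ were attained at some $x^*\in V$, the strong maximum principle on the connected graph $G$ would force $f\equiv M$, contradicting $f\equiv 0$ on $B_{R_0+1}(x_0)$; so $M$ is not attained. Choose $x_n\in V$ with $f(x_n)\to M$; by local finiteness and $f\equiv 0$ on the finite set $B_{R_0+1}(x_0)$ one may also arrange $d(x_n,x_0)\to\infty$. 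The discrete pointed GH compactness inside the class $\mathcal{BG}(C)$ provides, after a subsequence, a limit $(G_\infty,x_\infty)\in\mathcal{BG}(C)$; since $d(x_n,x_0)\to\infty$, every fixed ball around $x_\infty$ pulls back under the GH $\varepsilon$-isometries to a region on which $\CCD(0,\infty)$ holds, so $G_\infty$ satisfies $\CCD(0,\infty)$ at every vertex. Transferring $f$ along the GH identifications and diagonalizing yields a bounded harmonic $f_\infty$ on $G_\infty$ with $f_\infty(x_\infty)=M=\sup f_\infty$; by the Liouville theorem of \cite{hua2019Liouville} one has $f_\infty\equiv M$.

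The main obstacle is to convert this limiting information into a contradiction with $f\equiv 0$ on $B_{R_0+1}(x_0)$. My plan is via the $\Gamma_2$-calculus: for $x\in V\setminus B_{R_0}(x_0)$, the $\CCD(0,\infty)$ condition together with $\Delta f=0$ gives $\Delta\Gamma(f)(x)=2\Gamma_2(f)(x)\geq 0$, so the bounded nonnegative function $\Gamma(f)$ is subharmonic on $V\setminus B_{R_0}(x_0)$. Running the GH argument uniformly over all sequences along which $d(\cdot,x_0)\to\infty$---and using that bounded harmonic functions on the various $\CCD(0,\infty)$ limits are constants, hence have vanishing $\Gamma$---one obtains $\Gamma(f)(y)\to 0$ as $d(y,x_0)\to\infty$. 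A Phragm\'en--Lindel\"of-type maximum principle for the subharmonic function $\Gamma(f)$ on the infinite set $V\setminus B_{R_0}(x_0)$ with vanishing limit at infinity then bounds $\sup_V \Gamma(f)$ by its values on $S_{R_0+1}(x_0)$, which, using $f\equiv 0$ on $B_{R_0+1}(x_0)$ and bounded geometry, are controlled by $\sup_{S_{R_0+2}(x_0)}|f|^2$. Bootstrapping layer by layer outward---equivalently, repeating the argument with $B_{R_0}(x_0)$ replaced by $B_R(x_0)$ for increasing $R$---should force $\Gamma(f)\equiv 0$ outside $B_{R_0}(x_0)$; then $f$ is constant on each connected component of $V\setminus B_{R_0}(x_0)$, and since $f\equiv 0$ on $S_{R_0+1}(x_0)$ this gives $f\equiv 0$. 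Making the bootstrap rigorous---in particular, establishing the required uniformity of the GH-based decay of $\Gamma(f)$ at infinity---is the most delicate point, and is precisely what the discrete GH framework developed in the paper is designed to handle.
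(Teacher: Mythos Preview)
Your strategy coincides with the paper's: use the discrete Gromov--Hausdorff compactness together with the Liouville theorem for $\CCD(0,\infty)$ graphs to show $\Gamma(u)\to 0$ at infinity for every bounded harmonic $u$ (this is the paper's Lemma~\ref{lem:gradto01}), observe that $\CCD(0,\infty)$ outside $B_{R_0}(x_0)$ makes $\Gamma(u)$ subharmonic there (since $\Delta\Gamma(u)=2\Gamma_2(u)\geq 0$ when $\Delta u=0$), apply the maximum principle, and deduce injectivity of the restriction map. Your reduction from restriction to $S_{R_0+1}(x_0)$ to restriction to $B_{R_0+1}(x_0)$ via the interior maximum principle is fine and matches the paper's identification $\dim\mathcal H(B_{R_0}(x_0))=\sharp S_{R_0+1}(x_0)$.

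There is, however, an off-by-one slip in your maximum-principle step that generates the artificial ``bootstrapping'' difficulty you flag at the end. Subharmonicity of $\Gamma(f)$ holds at every $x$ with $d(x,x_0)\geq R_0+1$, so the maximum principle (with decay at infinity) forces the supremum of $\Gamma(f)$ to be attained already in $B_{R_0}(x_0)$, i.e.\ $\sup_{V}\Gamma(f)=\max_{x\in B_{R_0}(x_0)}\Gamma(f)(x)$, not merely a bound by the values on $S_{R_0+1}(x_0)$. Since $\Gamma(f)(x)$ for $x\in B_{R_0}(x_0)$ depends only on $f|_{B_{R_0+1}(x_0)}$, and you have already shown $f|_{B_{R_0+1}(x_0)}=0$, you get $\Gamma(f)\equiv 0$ on $V$ immediately---no layer-by-layer bootstrap is needed. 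This is exactly the paper's key claim \eqref{eq:BE1}. Your first attempt at Step~2, passing to a GH limit of the \emph{values} $f(x_n)$ and obtaining $f_\infty\equiv M$, indeed does not by itself yield a contradiction; you correctly abandon it for the $\Gamma(f)$ argument, and that detour can simply be deleted.
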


\begin{thm}\label{thm:mainOllivier} Let $G=(V,E,m,w)$ be a weighted graph with bounded geometry. If $G$ has nonnegative Ollivier curvature outside $B_{R_0}(x_0)$ for some $x_0\in V$ and $R_0\in \N,$  see Definition~\ref{def:outside}, then 
$$\dim \mathcal{H}_0(G)\leq \sharp S_{R_0+1}(x_0)<\infty.$$
\end{thm}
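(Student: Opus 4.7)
The plan is to prove that the linear evaluation map
$$\Phi\colon \mathcal{H}_0(G)\longrightarrow \R^{S_{R_0+1}(x_0)},\qquad h\mapsto\bigl(h(v)\bigr)_{v\in S_{R_0+1}(x_0)},$$
is injective. Since bounded geometry guarantees $\sharp S_{R_0+1}(x_0)<\infty$, injectivity immediately yields the asserted dimension bound. Accordingly I fix $h\in \mathcal{H}_0(G)$ with $h|_{S_{R_0+1}(x_0)}=0$ and aim to show $h\equiv 0$.

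First, observe that every vertex of $B_{R_0}(x_0)$ has all its neighbors in $B_{R_0+1}(x_0)$, so $h|_{B_{R_0+1}(x_0)}$ solves the Dirichlet problem on the finite connected set $B_{R_0}(x_0)$ with zero boundary data on $S_{R_0+1}(x_0)$; uniqueness forces $h\equiv 0$ on $B_{R_0+1}(x_0)$. Bounded geometry makes $V\setminus B_{R_0}(x_0)$ split into finitely many connected components $\Omega_1,\dots,\Omega_k$. A direct computation, using that $h$ vanishes on $B_{R_0+1}(x_0)$ and that the only edges leaving $\Omega_i$ end at vertices of $B_{R_0}(x_0)$ where $h=0$, shows that on each $\Omega_i$ the restriction $h|_{\Omega_i}$ is harmonic with respect to the induced-subgraph Laplacian. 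Hence it suffices to prove $h\equiv 0$ on each $\Omega_i$.

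Fix $\Omega:=\Omega_i$, let $M:=\sup_{\Omega} h$, and assume toward contradiction $M>0$ (the case $\inf h<0$ is symmetric). If $M$ is attained on $\Omega$, the strong maximum principle applied to the harmonic function $h|_{\Omega}$ on the connected graph $\Omega$ forces $h\equiv M$, contradicting $h=0$ on $S_{R_0+1}(x_0)\cap\Omega$. Otherwise pick $v_n\in\Omega$ with $h(v_n)\to M$; these must satisfy $d(v_n,x_0)\to\infty$, otherwise they lie in a finite set and the supremum is attained. By the discrete Gromov--Hausdorff precompactness afforded by bounded geometry (only finitely many isomorphism types of pointed $r$-balls for each $r$), pass to a subsequence so that $(G,v_n)\to(G_\infty,v_\infty)$. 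Since any fixed ball $B_r(v_n)$ eventually avoids $B_{R_0}(x_0)$, every vertex of $G_\infty$ inherits nonnegative Ollivier curvature. A further diagonal extraction, together with the uniform bound on $h$, produces a pointwise limit $h_\infty$ that is a bounded harmonic function on $G_\infty$ with $h_\infty(v_\infty)=M$ and $h_\infty\le M$. The Liouville theorem of Jost--Liu and M\"unch--Wojciechowski \cite{jost2019liouville,munch2019non} for graphs of nonnegative Ollivier curvature then gives $h_\infty\equiv M$, i.e. for each fixed radius $r$, $h\to M$ uniformly on $B_r(v_n)$.

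The main obstacle is to translate this ``asymptotic maximality'' of $h$ near $v_n$ into information at the boundary. A geodesic from $v_n$ to $x_0$ meets $S_{R_0+1}(x_0)\cap\Omega$ at a vertex $y_n$ with $d(v_n,y_n)=d(v_n,x_0)-R_0-1$, and a triangle-inequality check shows this geodesic stays in $\Omega$ (any detour through $B_{R_0}(x_0)$ would be too long). Were the Gromov--Hausdorff convergence radius $r_n$ at least $d(v_n,x_0)-R_0-1$, then $y_n\in B_{r_n}(v_n)$ would satisfy $0=h(y_n)\to M$, forcing $M\le 0$. Since the rate of Gromov--Hausdorff convergence cannot be controlled a priori, I plan an iterated halving argument along the geodesic: repeatedly replace the segment from $v_n$ to $y_n$ by one of its halves, applying Gromov--Hausdorff precompactness plus the Liouville step at each new basepoint and passing to further subsequences to ensure that $h$ continues to tend to $M$ along the chosen endpoint. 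After $O(\log d(v_n,x_0))$ halvings we reach $y_n$ and obtain the contradiction $M\le 0$. Combined with the symmetric argument for $\inf_{\Omega} h$, this yields $h\equiv 0$ on each $\Omega_i$, and together with the Dirichlet step gives $h\equiv 0$ on $V$, proving injectivity of $\Phi$.
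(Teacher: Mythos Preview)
Your overall strategy---showing that the restriction map to $S_{R_0+1}(x_0)$ is injective---matches the paper's, and the first steps (Dirichlet uniqueness on $B_{R_0}(x_0)$, reduction to components, strong maximum principle when the supremum is attained, and the Gromov--Hausdorff/Liouville step producing $h_\infty\equiv M$) are all correct. The problem is the final ``iterated halving'' argument, which does not work as stated. The Liouville step only gives you, for each \emph{fixed} radius $r$, that $\max_{B_r(v_n)}|h-M|\to 0$ along a subsequence; it says nothing about balls of radius growing with $n$. To pass from $v_n^{(k)}$ to the midpoint $v_n^{(k+1)}$ you must traverse a distance $\approx d(v_n,y_n)/2^{k+1}$, which for every fixed $k$ still tends to infinity, so no single application of the Liouville step yields $h(v_n^{(k+1)})\to M$. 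Worse, the number of halvings $O(\log d(v_n,x_0))$ itself depends on $n$ and diverges, so you would need to extract an unbounded, $n$-dependent chain of subsequences; there is no diagonal argument that makes this coherent. In short, the compactness argument lets you propagate ``$h$ is close to $M$'' across bounded distances, but gives no mechanism to carry this all the way to the fixed boundary $S_{R_0+1}(x_0)$.

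The paper circumvents exactly this difficulty by working with the \emph{gradient} rather than the function values. The same Gromov--Hausdorff/Liouville argument shows $|\nabla_e u|\to 0$ as $e\to\infty$ (Lemma~\ref{lem:gradto02}). The crucial additional ingredient, which has no analogue in your scheme, is a maximum principle for $|\nabla u|$ on finite regions with nonnegative Ollivier curvature (Lemma~\ref{lem:MaxPrincipleOllivierGradient}): applied to the annuli $B_R(x_0)\setminus B_{R_0-1}(x_0)$ and letting $R\to\infty$, it forces the supremum of $|\nabla u|$ outside $B_{R_0-1}(x_0)$ to be realized at $S_{R_0}(x_0)$. This is precisely the ``transport from infinity to the boundary'' that your halving attempt cannot achieve; from it the vanishing of $u$ on $B_{R_0+1}(x_0)$ immediately propagates to all of $V$.
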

The proof strategies are as follows. For the first step, we prove that for any bounded harmonic function $u$ on $G$ the gradient of the function, $\Gamma(u)$ (or $|\nabla u|$), tends to zero at infinity. To prove that, we use the pointed Gromov-Hausdorff convergence on graphs, see Section~\ref{Sec:Preliminaries}, and the contradiction argument. Suppose that it is not true, then there is a sequence of vertices $\{x_i\}_{i=1}^\infty$ tends to infinity such that $\liminf_{i\to\infty}\Gamma(u)(x_i)>0.$ Considering the sequence of rooted graphs $\{(G,x_i)\}_{i=1}^\infty,$ by the compactness, up to subsequence there exist a pointed Gromov-Hausdorff limit $(G_\infty,x_\infty)$ and a limiting harmonic function $u_\infty$ on $G_\infty.$ By the curvature assumption of $G,$ one can show that $G_\infty$ has nonnegative curvature everywhere. By the Liouville property (see \cite{hua2019Liouville, jost2019liouville,munch2019non}), $u_\infty$ is constant which contradicts that  $\liminf_{i\to\infty}\Gamma(u)(x_i)>0.$ For the second step, we show that the maximum of the gradient of $u$ is attained at the boundary of the negatively curved subset. This can be derived from the subharmonicity of $\Gamma(u)$ outside a finite subset for the case of Bakry-\'Emery curvature and the maximum principle. A modified argument is needed for the case of Ollivier curvature, see Lemma~\ref{lem:MaxPrincipleOllivierGradient}. As a consequence, we obtain some kind of unique continuation property for bounded harmonic functions. In the last step, we adopt linear algebra to count the dimension of bounded harmonic functions. This argument for proving the finite dimensionality of bounded harmonic functions seems new even for the Riemannian case.

As is well-known, the space of bounded harmonic functions is related to the number of non-parabolic ends in Riemannian geometry, see  \cite{LiTam87,LiTam92,SungTamWang00,Li12}. We extend this to the setting of graphs and obtain the following corollary.
\begin{coro}\label{coro:finiteend} Let $G$ be a graph as in Theorem~\ref{thm:mainBE} or Theorem~\ref{thm:mainOllivier}. Then the number of non-parabolic ends of $G$ is at most $\sharp S_{R_0+1}(x_0).$
\end{coro}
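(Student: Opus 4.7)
The plan is to reduce the statement to a bound on $\dim \mathcal{H}_0(G)$, which is supplied by Theorem~\ref{thm:mainBE} or Theorem~\ref{thm:mainOllivier}. The key intermediate step is to adapt, in the weighted graph setting, the Li--Tam correspondence (\cite{LiTam87,LiTam92}) that associates to each non-parabolic end a bounded harmonic function, so that
\[
\#\{\text{non-parabolic ends of } G\} \leq \dim \mathcal{H}_0(G) \leq \sharp S_{R_0+1}(x_0).
\]

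For the construction, I would fix an exhaustion $B_{R_0}(x_0) \subset \Omega_1 \subset \Omega_2 \subset \cdots$ of $V$ by finite connected subsets. An end $E$ of $G$ is represented by a nested sequence $\{E_n\}$ of unbounded connected components of $V \setminus \Omega_n$. For each $n$, let $u_n^E : \Omega_n \to [0,1]$ be the unique solution of the Dirichlet problem on $\Omega_n$ with boundary data equal to $1$ on $\partial \Omega_n \cap E_n$ and $0$ on the remaining boundary vertices. By the discrete maximum principle, $0 \leq u_n^E \leq 1$, and bounded geometry yields local Harnack-type estimates, so a diagonal extraction produces a pointwise limit $u_E \in \mathcal{H}_0(G)$ with $0 \leq u_E \leq 1$.

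The substantive point is that for a \emph{non-parabolic} end $E$ the resulting $u_E$ is non-constant, and that the family $\{u_{E^{(i)}}\}$ obtained from distinct non-parabolic ends is linearly independent in $\mathcal{H}_0(G)$. Non-parabolicity of $E$ is equivalent to the existence of a positive Green-type function on $E_1$ with vanishing boundary values; comparing $u_n^E$ with such a Green function on $E_n$ via maximum principle shows that $u_E$ retains a definite asymptotic value $1$ along $E$, while the analogous comparison on the other ends forces the asymptotic value to be strictly smaller there. For linear independence, if $E^{(1)},\dots,E^{(k)}$ are the non-parabolic ends and $\sum_i c_i u_{E^{(i)}} \equiv 0$, then evaluating the asymptotic values of the left-hand side separately on each $E^{(j)}$ (i.e.\ taking limits along vertex sequences in $E^{(j)}$) produces a linear system whose matrix is, up to a triangular perturbation controlled by the Green comparison, the identity; this forces all $c_i = 0$. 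Combining this with Theorems~\ref{thm:mainBE} and~\ref{thm:mainOllivier} yields the claim.

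The hard part is the potential-theoretic input: producing a Green function on a single end of a graph with bounded geometry and establishing the precise asymptotic behaviour of $u_E$ on each end. In the Riemannian category this is classical Li--Tam theory, and its random-walk incarnation on graphs is well-documented in the literature, so the main work is to verify that the construction is compatible with our weighted, bounded-geometry setting. The curvature hypothesis enters only in the final inequality via the bound on $\dim\mathcal{H}_0(G)$; the rest of the argument is robust and does not use curvature directly.
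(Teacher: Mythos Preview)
Your proposal is correct and follows essentially the same Li--Tam strategy as the paper: the paper proves in Section~4 (Theorem~\ref{thm:harm1}, Theorem~\ref{thm:bn1}, Corollary~\ref{coro:ct1}) that $N^0(G)\leq \dim\mathcal{H}_0(G)$ for any weighted graph, and then invokes Theorem~\ref{thm:mainBE} (or Theorem~\ref{thm:mainOllivier}) to bound the right-hand side. The only cosmetic difference is that the paper produces the harmonic function associated to an end by correcting the indicator $g_i=1_{\overline{\Pi_i}}$ with a finite combination of Green's functions (and then identifies this with the limit of the Dirichlet solutions $h_\rho$), whereas you take the Dirichlet-exhaustion limit directly; the asymptotic analysis via barrier/Green comparison and the resulting linear independence are the same.
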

\begin{rem} {It was proved in \cite{HuaMun21} that the number of infinite-volume ends of a graph with nonnegative Ollivier is at most two.}
\end{rem}

The paper is organized as follows: In the next section, we introduce the discrete Gromov-Hausdorff convergence, and curvature notions on graphs. In Section~\ref{sec:harmonic functions}, we prove the main theorems, Theorem~\ref{thm:mainBE} and Theorem~\ref{thm:mainOllivier}. The last section is devoted to the theory of harmonic functions  and  ends of graphs.

\section{preliminaries}\label{Sec:Preliminaries}

\subsection{Discrete Gromov-Hausdorff convergence}
Let $G$ be a combinatorial graph $(V,E)$ or a weighted graph $(V,E,m,w).$ For any vertex $x\in V,$
we call the pair $(G,x)$ a rooted graph on $G$ with the root $x.$

Let $C$ be a fixed positive constant. We denote by $\BG^p(C)$ the set of rooted graphs $(G,x)$ where $G\in \BG(C)$ and $x$ is a vertex in $G,$ and write $\BG^p$ in short if the constant $C$ is evident in the context. We define the topology on $\BG^p,$ which is a discrete analog of pointed Gromov-Hausdorff convergence of metric spaces.  Let $(V_i,E_i)$ be two combinatorial graphs and $p_i\in V_i,$ $i=1,2.$ If $\varphi:(V_1,E_1)\to (V_2,E_2)$ is a graph isomorphism and $\varphi(p_1)=p_2,$ then we call it a rooted graph isomorphism between $(V_1,E_1,p_1)$ and $(V_2,E_2,p_2),$ denoted by
$$\varphi:(V_1,E_1,p_1)\to (V_2,E_2,p_2).$$
\begin{defi} Let $(G_i,p_i), 1\leq i\leq \infty$ and $(G_\infty,p_\infty)$ be finite rooted graphs. We say that $(G_i,p_i)$ pointed Gromov-Hausdorff converges to $(G_\infty,p_\infty),$ denoted by $(G_i,p_i)\xrightarrow[]{pGH}(G_\infty,p_\infty),$
if there exists $N$ such that for any $i\geq N,$ there exists rooted graph isomorphism $$\varphi_{i}:(V_i,E_i,p_i)\to (V_\infty,E_\infty,p_\infty)$$ with the properties
\begin{equation}\label{def:mcv}m_i(\varphi_{i}^{-1}(x))\to m_\infty(x), \forall x\in V_\infty,\end{equation} and \begin{equation}\label{def:wcv}w_i(\varphi_{i}^{-1}(x),\varphi_{i}^{-1}(y))\to w_\infty(x,y), \forall x,y\in V_\infty, x\sim y.\end{equation}
\end{defi}
\begin{rem}\begin{enumerate}[(i)]\item The convergence $(G_i,p_i)\xrightarrow[]{pGH}(G_\infty,p_\infty)$ can be regarded as the discrete analog of pointed Gromov-Hausdorff convergence, where $G_i$ and $G_\infty$ are endowed with combinatorial distances. In our definition, we further require the convergence of weights on edges and vertices.
\item By the definition, if $(G_i,p_i)\xrightarrow[]{pGH}(G_\infty,p_\infty),$ then for any $x\in V_\infty,$
$$(G_i,\varphi_{i}^{-1}(x))\xrightarrow[]{pGH}(G_\infty,x).$$
\end{enumerate}
\end{rem}

\begin{defi} Let $\{(G_i,p_i)\}_{i=1}^\infty\subset \BG^p$ and $(G_\infty,p_\infty)\in \BG^p.$ We say that $(G_i,p_i)$ pointed Gromov-Hausdorff converges to $(G_\infty,p_\infty),$ denoted by $$(G_i,p_i)\xrightarrow[]{pGH}(G_\infty,p_\infty),$$ if
for any $R\in \N,$ $$(B_R^{G_i}(p_i),p_i)\xrightarrow[]{pGH}(B_R^{G_\infty}(p_\infty),p_\infty), \quad i\to\infty,$$ where $B_R^{G_i}(p_i)$ (resp. $B_R^{G_\infty}(p_\infty)$) is understood as the induced graph with inherited weights on $B_R^{G_i}(p_i)$ in $G_i$ (resp. on $B_R^{G_\infty}(p_\infty)$ in $G_\infty$). For sufficiently large $i,$ we denote the rooted graph isomorphisms by
$$\varphi_{i,R}:(B_R^{G_i}(p_i),p_i)\to(B_R^{G_\infty}(p_\infty),p_\infty).$$
\end{defi}

\begin{defi}\label{def:pGHGlobalConv} Let $\{(G_i,p_i)\}_{i=1}^\infty\cup \{(G_\infty,p_\infty)\}\subset \BG^p$ satisfy that $$(G_i,p_i)\xrightarrow[]{pGH}(G_\infty,p_\infty), i\to \infty.$$ Let $u_i:V_i\to\R$ and $u_\infty: V_\infty\to\R.$ We say that $u_i$ converges to $u_\infty$ if for any $R\in \N,$ and sufficiently large $i,$
$$u_i\circ \varphi_{i,R}^{-1}   {\longrightarrow} u_\infty,\quad \mathrm{on}\ B_R^{G_\infty}(p_\infty),\quad i\to\infty.$$
\end{defi}

We prove the discrete analog of Gromov's compactness theorem for pointed Gromov-Hausdorff convergence on graphs.

\begin{thm}\label{thm:comp1} The space $\BG^p$ equipped with the pointed Gromov-Hausdorff topology is sequentially compact, i.e. for any sequence $\{(G_i,p_i)\}_{i=1}^\infty\subset \BG^p,$ there exist a subsequence, still denoted by $\{(G_i,p_i)\}_{i=1}^\infty$, and $(G_\infty,p_\infty)\in \BG^p$ such that
$$(G_i,p_i)\xrightarrow[]{pGH}(G_\infty,p_\infty).$$ 
\end{thm}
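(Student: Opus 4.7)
The plan is a diagonal extraction argument, using two key inputs: a uniform bound on the size of each ball $B_R^{G_i}(p_i)$ coming from $\deg\leq C$, together with the compactness of the weight range $[C^{-1},C]$.

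First I would record the size bound. Bounded degree gives $\sharp S_k^{G_i}(p_i)\leq C^k$, hence $\sharp B_R^{G_i}(p_i)\leq N_R:=1+C+C^2+\cdots+C^R$, uniformly in $i$. For each fixed $R\in\N$ the rooted combinatorial graphs $(B_R^{G_i}(p_i),p_i)$ therefore realize only finitely many rooted isomorphism types, so by pigeonhole I extract a subsequence along which they are all rooted-isomorphic to a single finite rooted graph $(H_R,q_R)$. Fix any rooted isomorphism $\varphi_{i,R}:(B_R^{G_i}(p_i),p_i)\to(H_R,q_R)$. The pulled-back weights $m_i\circ\varphi_{i,R}^{-1}$ and $w_i\circ\varphi_{i,R}^{-1}$ lie in the compact boxes $[C^{-1},C]^{V(H_R)}$ and $[C^{-1},C]^{E(H_R)}$, so a further Bolzano--Weierstrass extraction yields convergence to limit weights $m_\infty^{(R)},w_\infty^{(R)}$, still satisfying the $\BG(C)$ bounds. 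This gives a limit weighted rooted graph $(G_\infty^{(R)},p_\infty^{(R)})\in\BG^p(C)$ with $(B_R^{G_i}(p_i),p_i)\xrightarrow{pGH}(G_\infty^{(R)},p_\infty^{(R)})$ in the sense of the first definition.

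I would now apply this extraction successively for $R=1,2,3,\ldots$, refining at each stage, and take the diagonal subsequence so that convergence holds simultaneously for every $R$. Since a rooted graph isomorphism preserves distance from the root, $\varphi_{i,R+1}$ restricts to a rooted isomorphism of $B_R^{G_i}(p_i)$ onto $B_R^{H_{R+1}}(q_{R+1})$; by choosing $\varphi_{i,R}$ to be this restriction, I may assume $H_R$ is literally the $R$-ball of $q_{R+1}$ in $H_{R+1}$, and then $(G_\infty^{(R)},p_\infty^{(R)})$ is the induced weighted sub-rooted-graph $(B_R^{G_\infty^{(R+1)}}(p_\infty^{(R+1)}),p_\infty^{(R+1)})$. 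The family $\{(G_\infty^{(R)},p_\infty^{(R)})\}_R$ is therefore honestly nested, and its union (inverse limit) defines a single connected, locally finite weighted rooted graph $(G_\infty,p_\infty)\in\BG^p(C)$ satisfying $B_R^{G_\infty}(p_\infty)=G_\infty^{(R)}$ for all $R$. By construction, the diagonal subsequence satisfies $(G_i,p_i)\xrightarrow{pGH}(G_\infty,p_\infty)$.

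The main obstacle is organizational rather than conceptual: one must choose the $\varphi_{i,R}$ coherently so that $\{G_\infty^{(R)}\}_R$ forms an actually nested family, not merely nested up to rooted automorphism. The trick above---first extract for $R+1$, then take the restriction of $\varphi_{i,R+1}$ as $\varphi_{i,R}$---handles this automatically. That the $\BG(C)$ bounds, local finiteness, and connectedness pass to $(G_\infty,p_\infty)$ is immediate from the pointwise convergence of the weights on each finite ball.
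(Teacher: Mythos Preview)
Your proof is correct and takes essentially the same route as the paper's: finiteness of rooted combinatorial types of $R$-balls from the degree bound, Bolzano--Weierstrass on $[C^{-1},C]$ for the weights, a diagonal extraction, and a coherence step to nest the finite limits into a single $(G_\infty,p_\infty)\in\BG^p(C)$. The only difference is organizational: the paper arranges coherence \emph{forward}, extracting at stage $R+1$ a further subsequence admitting isomorphisms $\varphi_{j,R+1}$ that extend the already-fixed $\varphi_{i_j,R}$ (possible since, with $\varphi_{i,R}$ fixed, only finitely many extensions to an $(R+1)$-ball exist), which sidesteps the formal infinite regress in your backward scheme $\varphi_{i,R}:=\varphi_{i,R+1}|_{B_R}$.
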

\begin{proof} Let $R\in \N$ and $C_1>0.$ We set $$\mathcal{G}_{R,C_1}^p:=\{(V,E,x_0): x_0\in V, \deg(x)\leq C_1, d(x,x_0)\leq R \mbox{ for all } x \in V\}/\sim,$$ where $\sim$ denotes the equivalence condition of rooted graph isomorphism. Note that there are only finitely many combinatorial structure in the class $\mathcal{G}_{R,C_1}^p.$ 

Let $\BG^p=\BG^p(C)$ for $C>0.$ Choose $C_1\geq C.$ For the sequence $\{(G_i,p_i)\}_{i=1}^\infty\subset \BG^p,$ $$\{(\widehat{B}_R^{G_i}(p_i),p_i)\}_{i=1}^\infty\subset \mathcal{G}_{R,C_1}^p$$ where $\widehat{B}_R^{G_i}(p_i)$ denote the induced combinatorial graphs on $B_R^{G_i}(p_i).$
Hence there is a subsequence, still denoted by $\{(G_i,p_i)\}_{i=1}^\infty,$ and a rooted graph $(G_R,p_\infty)\in\mathcal{G}_{R,C_1}^p$ such that
$(\widehat{B}_R^{G_i}(p_i),p_i)$ is rooted-graph isomorphic to $(G_R^\infty,p_\infty)$ with $G_R^\infty=(V_R^\infty,E_R^\infty)$ for sufficiently large $i,$ i.e. \begin{equation}\label{eq:comp1}\varphi_{i,R}:(\widehat{B}_R^{G_i}(p_i),p_i)\to (G_R^\infty,p_\infty).\end{equation}
Note that there are finitely many edges and vertices in $G_R^\infty,$ and by $\{(G_i,p_i)\}_{i=1}^\infty\subset \BG^p$ and Definition \ref{def:bg1},
$$C^{-1}\leq m_i(\varphi_{i,R}^{-1}(x))\leq C,\quad \mathrm{and}$$ $$C^{-1}\leq w_i(\varphi_{i,R}^{-1}(x),\varphi_{i,R}^{-1}(y))\leq C,\quad \forall x,y\in V_R, x\sim y.$$ Hence there is a subsequence, still denoted by $\{(G_i,p_i)\}_{i=1}^\infty,$ such that
$$m_i(\varphi_{i,R}^{-1}(x))\to a_R(x)\in [C^{-1},C], \quad w_i(\varphi_{i,R}^{-1}(x),\varphi_{i,R}^{-1}(y))\to b_R(x,y)\in [C^{-1},C]\quad \forall x,y\in V_R, x\sim y.$$  We set the weights on $G_R^\infty$ by
$$m(x)=a_R(x), \forall x\in V_R^\infty\ \mathrm{and} \quad w(x,y)=b_R(x,y),  \forall x,y\in V_R^\infty, x\sim y.$$
This yields that for the subsequence $\{(G_i,p_i)\}_{i=1}^\infty,$ 
$$(B_R^{G_i}(p_i),p_i)\xrightarrow[]{pGH}(G_R^\infty,p_\infty), \quad i\to\infty,$$ where $B_R^{G_i}(p_i)$ are the induced subgraphs with inherited weights on $B_R^{G_i}(p_i)$ from $G_i.$

Now we consider $R+1.$ We fix the rooted graph isomorphism in \eqref{eq:comp1}.
Consider the induced subgraphs $\{(\widehat{B}_{R+1}^{G_i}(p_i),p_i)\}_{i=1}^\infty.$ There are finitely many combinatorial structures in $\mathcal{G}_{R+1,C_1}^p$ when fixing the structure for $(\widehat{B}_{R}^{G_i}(p_i),p_i).$ So that there is a subsequence, denoted by  $\{(G_{i_j},p_{i_j})\}_{j=1}^\infty,$ and $(G_{R+1}^\infty,p_\infty)\in\mathcal{G}_{R+1,C_1}^p$ such that for sufficiently large $j$ there exist rooted graph isomorphisms 
\begin{equation*}\varphi_{j,R+1}:(\widehat{B}_{R+1}^{G_{i_j}}(p_{i_j}),p_{i_j})\to (G_{R+1}^\infty,p_\infty)\end{equation*}
satisfying $\varphi_{j,R+1}\big|_{B_{R}^{G_{i_j}}(p_{i_j})}=\varphi_{i_j,R}\big|_{B_{R}^{G_{i_j}}(p_{i_j})}.$ That is, in our construction we can require that $\varphi_{j,R+1}$ extends $\varphi_{i_j,R}.$ So that there is a rooted graph isomorphism $(G_R^\infty,p_\infty)\to (\widehat{B}_R^{G_{R+1}^\infty}(p_\infty),p_\infty).$ Hence we can identify $G_R^\infty$ as a subgraph $\widehat{B}_R^{G_{R+1}^\infty}(p_\infty)$ of $G_{R+1}^\infty.$ By the same argument for $R,$ there are a subsequence, still denoted by 
 $\{(G_i,p_i)\}_{i=1}^\infty$ for simplicity, and weights on $G_{R+1}^\infty$ such that
$$(B_{R+1}^{G_i}(p_i),p_i)\xrightarrow[]{pGH}(G_{R+1}^\infty,p_\infty), \quad i\to\infty.$$ Note that one can choose a subsequence s.t.
\begin{equation}\label{eq:gc1}m_R^\infty(x)=m_{R+1}^\infty(x), \forall x\in V_{R}^\infty,\quad w_{R}^\infty(x,y)=w_{R+1}^\infty(x,y),\quad \forall x,y\in V_{R}^\infty, x\sim y.\end{equation}

By the induction for $R=1,2,\cdots$ and the diagonal argument, there exists a subsequence, still denoted by $\{(G_i,p_i)\}_{i=1}^\infty,$ such that for all $R \in \N$,
$$(B_R^{G_i}(p_i),p_i)\xrightarrow[]{pGH}(G_R^\infty,p_\infty), \quad i\to\infty.$$
Note that $(G_R^\infty,p_\infty)$ is rooted-graph isomorphic to $\widehat{B}_R^{G_{R+1}^\infty}(p_\infty)$ of $G_{R+1}^\infty$ with proper edge and vertex weights satisfying \eqref{eq:gc1}. Since the graph structures are consistent, we may find a rooted graph $(G_\infty, p_\infty)$ such that
$(G_R^\infty,p_\infty)$ is rooted-graph isomorphic to $B_R^{G_{\infty}}(p_\infty)$ in $(G_{\infty},p_\infty)$ for all $R\in \N.$ We can endow $(G_\infty, p_\infty)$ with the edge and vertex weights by the property \eqref{eq:gc1}.   By the definition of pointed Gromov-Hausdorff convergence,
$$(G_i,p_i)\xrightarrow[]{pGH}(G_\infty,p_\infty), \quad i\to\infty.$$ This proves the theorem.
\end{proof}


For any function $u:V\to\R$ and $\Omega\subset V,$ we denote by $$\|u\|_{\ell^\infty(\Omega)}:=\sup_{x\in \Omega}|u(x)|$$ the $\ell^\infty$ norm of $u$ on $\Omega.$ If $\Omega=V,$ then we write $\|u\|_\infty$ for simplicity.

\begin{thm}\label{thm:comp2} Let $\{(G_i,p_i)\}_{i=1}^\infty\subset \BG^p$ and $u_i: V_i\to\R.$ Suppose that
$\sup_i\|u_i\|_\infty<\infty,$ then there exist a subsequence, still denoted by $\{(G_i,p_i)\}_{i=1}^\infty,$ $(G_\infty,p_\infty)\in \BG^p,$ and $u_\infty:V_\infty\to\R,$ such that
$$(G_i,p_i)\xrightarrow[]{pGH}(G_\infty,p_\infty), \quad\mathrm{and}\quad u_i\to u_\infty,\quad i\to \infty$$
where the convergence of $u_i$ is defined in Definition~\ref{def:pGHGlobalConv}. 
\end{thm}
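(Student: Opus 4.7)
The plan is to combine the graph compactness theorem already established, namely Theorem~\ref{thm:comp1}, with a standard Bolzano-Weierstrass/diagonal extraction for the uniformly bounded sequence $\{u_i\}.$ Since the proof of Theorem~\ref{thm:comp1} constructs the limit $(G_\infty,p_\infty)$ via nested subsequences with compatible rooted graph isomorphisms $\varphi_{i,R}$ satisfying $\varphi_{i,R+1}|_{B_R^{G_i}(p_i)}=\varphi_{i,R},$ these isomorphisms give us a canonical way to pull $u_i$ back to the same finite vertex set $B_R^{G_\infty}(p_\infty)$ for all sufficiently large $i.$

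First, I would apply Theorem~\ref{thm:comp1} to pass to a subsequence (not relabeled) with $(G_i,p_i)\xrightarrow[]{pGH}(G_\infty,p_\infty)$ and corresponding isomorphisms $\varphi_{i,R}.$ Fix any $R\in\N.$ Because $G_\infty\in\BG(C),$ the ball $B_R^{G_\infty}(p_\infty)$ is a finite set of cardinality bounded by $1+C+\cdots+C^R.$ The pulled-back functions $u_i\circ\varphi_{i,R}^{-1}:B_R^{G_\infty}(p_\infty)\to\R$ are all bounded in absolute value by $M:=\sup_i\|u_i\|_\infty<\infty,$ and hence live in a compact subset of $\R^{B_R^{G_\infty}(p_\infty)}.$ Extract a subsequence along which $u_i\circ\varphi_{i,R}^{-1}$ converges pointwise (equivalently, uniformly, since the ball is finite) to some function $v_R:B_R^{G_\infty}(p_\infty)\to\R.$

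Second, I would perform a diagonal extraction over $R=1,2,3,\ldots$. Starting from the subsequence obtained for $R=1,$ at stage $R+1$ extract a further subsequence on which $u_i\circ\varphi_{i,R+1}^{-1}$ converges on $B_{R+1}^{G_\infty}(p_\infty)$ to some $v_{R+1}.$ By the compatibility $\varphi_{i,R+1}|_{B_R^{G_i}(p_i)}=\varphi_{i,R}$ inherited from the proof of Theorem~\ref{thm:comp1}, the restriction of $v_{R+1}$ to $B_R^{G_\infty}(p_\infty)$ equals $v_R.$ Thus the $v_R$ patch together into a well-defined function $u_\infty:V_\infty\to\R$ with $u_\infty|_{B_R^{G_\infty}(p_\infty)}=v_R$ for every $R.$ Taking the diagonal subsequence yields $u_i\circ\varphi_{i,R}^{-1}\to u_\infty$ on $B_R^{G_\infty}(p_\infty)$ for every $R,$ which is precisely the convergence $u_i\to u_\infty$ of Definition~\ref{def:pGHGlobalConv}.

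I do not anticipate a genuine obstacle here; the only point that needs slight care is confirming that the isomorphisms $\varphi_{i,R}$ produced in Theorem~\ref{thm:comp1} are compatible as $R$ grows, so that the pointwise limits on successive balls automatically agree on the overlaps. This compatibility is built into the inductive construction in the proof of Theorem~\ref{thm:comp1} (the extension property of $\varphi_{j,R+1}$ over $\varphi_{i_j,R}$), so the patching step is automatic and the diagonal argument produces the desired subsequence and limit function $u_\infty.$
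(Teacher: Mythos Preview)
Your proposal is correct and follows essentially the same approach as the paper. The paper's own proof is a single sentence—``The result follows verbatim from the proof of Theorem~\ref{thm:comp1} by regarding $u_i$ as $m_i$ therein''—which is exactly the Bolzano--Weierstrass/diagonal extraction you have spelled out, using the compatibility of the $\varphi_{i,R}$ built into the construction of Theorem~\ref{thm:comp1}.
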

\begin{proof} The result follows verbatim from the proof of Theorem~\ref{thm:comp1} by regarding $u_i$ as $m_i$ therein.
\end{proof}

\subsection{Curvature conditions}\label{subsec:curvature}

We introduce the $\Gamma$-calculus and Bakry-\'Emery's curvature dimension conditions on graphs following \cite{Elw91,Schmuck96,LinYau10}.


First we define two natural bilinear forms associated to the Laplacian $\Delta$. We denote by $C(V)$ the set of functions on $V.$ 
\begin{defi}\label{d:carre du}
  The gradient form $\Gamma,$ called the ``carr\'e du champ" operator, is defined by, for $f,g\in C(V)$ and $x\in V$,
  \begin{eqnarray*}\Gamma(f,g)(x)&=&\frac12(\Delta(fg)-f\Delta g-g\Delta f)(x)\\&=&\frac{1}{2m(x)}
  \sum_{y\sim x}w(x,y)(f(y)-f(x))(g(y)-g(x)).\end{eqnarray*} For simplicity, we write $\Gamma(f):=\Gamma(f,f).$ The $\Gamma_2$ operator is defined as $$\Gamma_2(f,g)=\frac12(\Delta\Gamma(f,g)-\Gamma(f,\Delta g)-\Gamma(g,\Delta f)),$$ and we write $\Gamma_2(f):=\Gamma_2(f,f)=\frac{1}{2}\Delta \Gamma(f)-\Gamma(f,\Delta f).$
\end{defi}


Now we introduce curvature dimension conditions on graphs.
\begin{defi}\label{d:curvature dimension}
 Let $K\in \R, n\in (0,\infty].$ We say a weighted graph $G$ satisfies the $\CCD(K,n)$ condition at $x\in V,$ denoted by $\CCD(K,n,x),$ if for any $f\in C(V)$,
  \begin{equation}\label{CDcondition}\Gamma_2(f)(x)\geq \frac1n(\Delta f(x))^2+K\Gamma (f)(x).\end{equation} For given $n$ and $x,$ we denote by $\mathcal{K}^{G}_{n}(x)$ the maximal constant $K$ such that the above inequality holds for all $f.$ We call it the Bakry-\'Emery curvature at $x.$ 
  We say that a graph satisfies the $\CCD(K,n)$ condition if $\CCD(K,n,x)$ holds for all $x\in V.$ 
\end{defi}

In contrast to Bakry-\'Emery curvature based on the $\Gamma$-calculus, the Ollivier curvature is based on the Wasserstein distance via the optimal transport \cite{Ollivier09}, see also \cite{LinLuYau11,munch2019Ollivier}. By \cite[Theorem~2.1]{munch2019Ollivier}, the Ollivier curvature $\kappa(x,y),$ $x,y\in V,$ can be calculated as
\begin{equation}\label{eq:ofc}
\kappa(x,y)=\inf_{\nabla_{yx}f=1,|\nabla f|_\infty=1}\nabla_{xy}\Delta f.
\end{equation}
where
$\nabla_{xy} f  :=\frac{f(x)-f(y)}{d(x,y)}$ 
and
$|\nabla f|_{\infty} 
:= \sup_{x\sim y} \nabla_{xy}f$.

For any subset $\Omega$ in $V,$ we denote by $$E_\Omega:=\{e=\{x,y\}\in E: x\in \Omega, y\in \Omega\}$$ the set of edges whose end-vertices are contained in $\Omega.$
We define the weighted graph with nonnegative curvature outside a finite subset as follows.
\begin{defi}\label{def:outside}
Let $G$ be a weighted graph and $\Omega$ be a finite subset of $V.$ We say that $G$ satisfies $\CCD(0,\infty)$ outside $\Omega$ if $$\mathcal{K}^{G}_{\infty}(x)\geq 0,\quad \forall x\in V\setminus \Omega.$$ We say that $G$ has nonnegative Ollivier curvature outside $\Omega$ if  $$\kappa(x,y)\geq 0,\quad \forall e=\{x,y\}\in E_{V\setminus \Omega}.$$
\end{defi}

Since these curvature conditions are local properties, we can show that the curvature lower bound is preserved under the pointed Gromov-Hausdorff convergence.

\begin{prop}\label{prop:lowerbound1}Let $\{(G_i,p_i)\}_{i=1}^\infty\cup\{(G_\infty,p_\infty)\}\subset \BG^p$ satisfy $$(B_2^{G_i}(p_i),p_i)\xrightarrow[]{pGH}(B_2^{G_\infty}(p_\infty),p_\infty), \quad i\to\infty,$$ with the rooted graph isomorphism $\varphi_i$ for sufficiently large $i.$ Let $K\in \R.$ Suppose that for sufficiently large $i,$
$\mathcal{K}_{\infty}^{G_i}(p_i)\geq K,$ then $$\mathcal{K}_{\infty}^{G_\infty}(p_\infty)\geq K.$$ 
\end{prop}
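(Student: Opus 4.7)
The plan is to exploit the strict locality of the $\CCD(K,\infty)$ condition at a single vertex. For any $f \in C(V)$, the quantities $\Delta f(x)$ and $\Gamma(f)(x)$ depend only on the restriction of $f$ to $B_1(x)$ and on the weights $m(y), w(x,y)$ for $y \sim x$. Consequently $\Gamma_2(f)(x) = \tfrac{1}{2}\Delta\Gamma(f)(x) - \Gamma(f,\Delta f)(x)$ depends only on $f|_{B_2(x)}$ and on the vertex and edge weights inside $B_2(x)$. Hence the curvature $\mathcal{K}_\infty^G(x)$ is determined entirely by the weighted induced subgraph on $B_2^G(x)$, which is precisely the object whose convergence is assumed.

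First I would reduce to a finite comparison. Given an arbitrary function $f : V_\infty \to \R$, define the pullback $f_i := f \circ \varphi_i$ on $B_2^{G_i}(p_i)$, extended arbitrarily (say by zero) to the rest of $V_i$; by the locality observation above, the extension does not affect $\Gamma_2^{G_i}(f_i)(p_i)$ or $\Gamma^{G_i}(f_i)(p_i)$. The hypothesis $\mathcal{K}_\infty^{G_i}(p_i) \geq K$ then yields, for all sufficiently large $i$,
\[
\Gamma_2^{G_i}(f_i)(p_i) \;\geq\; K\, \Gamma^{G_i}(f_i)(p_i).
\]

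Next I would pass to the limit. Since $\varphi_i$ is a rooted graph isomorphism of $B_2$-balls and both $m_i(\varphi_i^{-1}(\cdot)) \to m_\infty(\cdot)$ and $w_i(\varphi_i^{-1}(\cdot),\varphi_i^{-1}(\cdot)) \to w_\infty(\cdot,\cdot)$ pointwise on the finite set $B_2^{G_\infty}(p_\infty)$, the values $\Delta^{G_i} f_i(y)$ and $\Gamma^{G_i}(f_i)(y)$ for $y$ at distance at most one from $p_i$ are finite rational expressions in these data, with the corresponding expression on $G_\infty$ as limit. Combining,
\[
\Gamma_2^{G_i}(f_i)(p_i) \to \Gamma_2^{G_\infty}(f)(p_\infty), \qquad \Gamma^{G_i}(f_i)(p_i) \to \Gamma^{G_\infty}(f)(p_\infty),
\]
so passing to the limit in the displayed inequality gives $\Gamma_2^{G_\infty}(f)(p_\infty) \geq K\, \Gamma^{G_\infty}(f)(p_\infty)$. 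As $f$ was arbitrary, $\mathcal{K}_\infty^{G_\infty}(p_\infty) \geq K$.

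There is no serious obstacle here; the only item requiring care is the bookkeeping in the first step, namely checking that $f_i := f \circ \varphi_i$ is admissible as a test function and that its arbitrary extension outside $B_2^{G_i}(p_i)$ is invisible to both sides of the $\CCD$ inequality at $p_i$. The choice $n = \infty$ also simplifies the argument by removing the $(\Delta f)^2/n$ term, though the same convergence applied to $\Delta^{G_i} f_i(p_i)$ would handle that term equally well if one wished to generalize to finite $n$.
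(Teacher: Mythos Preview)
Your proposal is correct and follows essentially the same approach as the paper's proof: pull back a test function via the rooted isomorphism $\varphi_i$, apply the $\CCD(K,\infty)$ inequality at $p_i$, and pass to the limit using the convergence of vertex and edge weights on the finite ball $B_2$. Your write-up is in fact more explicit than the paper's about the locality of $\Gamma_2$ and the irrelevance of the extension outside $B_2^{G_i}(p_i)$, which is a welcome clarification.
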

\begin{proof} Note that the curvature $\mathcal{K}_{\infty}^{G_\infty}(p_\infty)$ is determined by the structure of $B_2^{G_\infty}(p_\infty).$ For any function $f:B_2^{G_\infty}(p_\infty)\to\R,$ we set $f_i=f\circ \varphi_i: B_2^{G_i}(p_i)\to\R.$ By the curvature condition $\mathcal{K}_{\infty}^{G_i}(p_i)\geq K,$ $$\Gamma_2^{G_i}(f_i)-K\Gamma^{G_i}(f_i)\geq 0.$$ By passing to the limit, $i\to\infty,$ noting that \eqref{def:mcv} and \eqref{def:wcv},
 $$\Gamma_2^{G_\infty}(f)-K\Gamma^{G_\infty}(f)\geq 0.$$ This yields the result.
\end{proof}

\begin{prop}\label{prop:lowerbound2}Let $\{(G_i,p_i)\}_{i=1}^\infty\cup\{(G_\infty,p_\infty)\}\subset \BG^p$ satisfy $$(B_3^{G_i}(p_i),p_i)\xrightarrow[]{pGH}(B_3^{G_\infty}(p_\infty),p_\infty), \quad i\to\infty,$$ with the rooted graph isomorphism $\varphi_i$ for sufficiently large $i.$ Let $K\in \R$ and $x\in V_\infty,$ $x\sim p_\infty.$ Suppose that for sufficiently large $i,$
$\kappa^{G_i}(p_i,\varphi_i^{-1}(x))\geq K,$ then $$\kappa^{G_\infty}(p_\infty,x)\geq K.$$ 
\end{prop}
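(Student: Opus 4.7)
By the dual characterization \eqref{eq:ofc}, it suffices to show that every $f\colon V_\infty\to\R$ with $\nabla_{x,p_\infty}^{G_\infty}f=1$ and $|\nabla f|_\infty^{G_\infty}=1$ satisfies $\nabla_{p_\infty,x}^{G_\infty}\Delta^{G_\infty}f\geq K$. The plan is to transfer any such test function to $G_i$, apply the curvature hypothesis, and pass to the limit $i\to\infty$. This is not quite as immediate as in Proposition~\ref{prop:lowerbound1}, because the admissibility constraint $|\nabla f|_\infty=1$ is a global condition on the whole graph, not a pointwise one.

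Given $f$, I would set $f_i^0:=f\circ\varphi_i$ on $B_2^{G_i}(p_i)$ (a subset of the domain of $\varphi_i$) and extend it to $V_i$ by the McShane--Whitney formula
$$f_i(z) := \inf_{a\in B_2^{G_i}(p_i)} \bigl(f_i^0(a)+d^{G_i}(z,a)\bigr), \qquad z\in V_i.$$
The key point, and where the $B_3$-convergence hypothesis enters, is the metric identity
$$d^{G_i}(a,b)=d^{G_\infty}(\varphi_i(a),\varphi_i(b)), \qquad a,b\in B_2^{G_i}(p_i);$$
both sides equal $d^{B_3^{G_i}}(a,b)$, respectively its $G_\infty$-counterpart, because any two points of $B_2$ lie at combinatorial distance at most $4$ (triangle inequality through the root), and such a shortest path can be chosen inside $B_3$: automatically for lengths at most $3$, and via the detour $a\to p_i\to b$ of length at most $4$ in the extremal case. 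Combined with the fact that $\varphi_i$ is a graph isomorphism on $B_3$, this identity makes $f_i^0$ a $1$-Lipschitz function on $(B_2^{G_i}(p_i),d^{G_i})$, so $f_i$ is $1$-Lipschitz on $(V_i,d^{G_i})$ and agrees with $f\circ\varphi_i$ on $B_2^{G_i}(p_i)$. Hence $|\nabla f_i|_\infty^{G_i}\leq 1$ while $\nabla_{\varphi_i^{-1}(x),p_i}^{G_i}f_i=\nabla_{x,p_\infty}^{G_\infty}f=1$, forcing $|\nabla f_i|_\infty^{G_i}=1$, so $f_i$ is admissible for $\kappa^{G_i}(p_i,\varphi_i^{-1}(x))$ in \eqref{eq:ofc}.

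The hypothesis $\kappa^{G_i}(p_i,\varphi_i^{-1}(x))\geq K$ and \eqref{eq:ofc} then give $\nabla_{p_i,\varphi_i^{-1}(x)}^{G_i}\Delta^{G_i}f_i\geq K$ for all large $i$. Since $\Delta^{G_i}f_i(p_i)$ and $\Delta^{G_i}f_i(\varphi_i^{-1}(x))$ depend only on the values of $f_i$ on $B_1^{G_i}(p_i)\cup B_1^{G_i}(\varphi_i^{-1}(x))\subset B_2^{G_i}(p_i)$, where $f_i=f\circ\varphi_i$, together with the local weights $m_i$, $w_i$, the convergences \eqref{def:mcv}--\eqref{def:wcv} yield
$$\nabla_{p_i,\varphi_i^{-1}(x)}^{G_i}\Delta^{G_i}f_i \;\longrightarrow\; \nabla_{p_\infty,x}^{G_\infty}\Delta^{G_\infty}f,$$
so this limit is $\geq K$; taking the infimum over admissible $f$ yields the proposition. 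The main obstacle is the Lipschitz-extension step: producing a globally admissible test function on $G_i$ out of local data on $B_2$ is exactly what forces the assumption of $B_3$-level convergence (rather than the $B_2$-level convergence sufficient for Bakry--\'Emery in Proposition~\ref{prop:lowerbound1}), since the extension is valid only when the local $f\circ\varphi_i$ is $1$-Lipschitz with respect to the \emph{ambient} metric $d^{G_i}$, which is precisely what the key metric identity above guarantees.
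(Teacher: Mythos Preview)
Your proof is correct and follows essentially the same route as the paper: both rely on the locality of the Ollivier curvature at an edge (it is determined by the $B_3$-structure), pull back a test function via $\varphi_i$, apply the curvature hypothesis, and pass to the limit using \eqref{def:mcv}--\eqref{def:wcv}. The paper's proof simply asserts this locality and works with functions $f:B_3^{G_\infty}(p_\infty)\to\R$ throughout, whereas you make the locality explicit by (i) proving the metric identity $d^{G_i}=d^{G_\infty}\circ\varphi_i$ on $B_2$ from the $B_3$-isomorphism, and (ii) extending the pulled-back test function to all of $V_i$ via McShane--Whitney so that the global constraint $|\nabla f_i|_\infty^{G_i}=1$ in \eqref{eq:ofc} is honestly satisfied; this extra care is exactly what justifies the paper's terse assertion, and your observation about the distance-$4$ detour through the root pinpoints why $B_3$ (rather than $B_2$) convergence is the right hypothesis.
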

\begin{proof} Note that the curvature condition of $\mathcal{K}_{\infty}^{G_\infty}(p_\infty,x)$ is determined by the structure of $B_3^{G_\infty}(p_\infty).$ By the property \eqref{eq:ofc}, for any function $f:B_3^{G_\infty}(p_\infty)\to\R$ satisfying $\nabla_{xp_\infty}f=1$ and $|\nabla f|_\infty=1,$ we set $f_i=f\circ \varphi_i: B_3^{G_i}(p_i)\to\R.$ Note that $\nabla_{\varphi_i^{-1}(x)p_i}f_i=1$ and $|\nabla f_i|_\infty=1.$ By the curvature condition $\kappa^{G_i}(p_i,\varphi_i^{-1}(x))\geq K,$ $$\nabla_{p_i\varphi_i^{-1}(x)}\Delta^{G_i} f_i\geq K.$$ By passing to the limit, $i\to\infty,$ noting that \eqref{def:mcv} and \eqref{def:wcv},
 $$\nabla_{p_\infty x}\Delta^{G_\infty} f\geq K.$$ This proves the result.
\end{proof}

\section{Bounded harmonic functions}\label{sec:harmonic functions}
Let $G=(V,E,m,w)$ be a weighted graph. Fix a vertex $x_0\in V.$ For a sequence of vertices $\{x_i\}_{i=1}^\infty,$ we write $x_i\to \infty$ if $$d(x_i,x_0)\to\infty, i\to \infty.$$ 
For a function $f\in C(V)$ and $A\in \R,$ we say that $$f(x)\to A\quad \mathrm{as}\ x\to \infty,\quad \mathrm{denoted\ by} \lim_{x\to \infty} f(x)=A,$$ if for all $\epsilon>0,$ there exists $N>0$ such that 
$|f(x)-A|<\epsilon$ for all $x\in V\setminus B_N(x_0)$. Similarly, we may also define $\liminf_{x\to \infty}f$ and $\limsup_{x\to \infty}f.$

For any $x,y\in V,$ $x\neq y$, and any function $f\in C(V),$ we recall that $\nabla_{xy}f=\frac{f(x)-f(y)}{d(x,y)}.$ Moreover, by the triangle inequality, $$|\nabla f|_\infty =\sup_{x,y\in V, x\neq y} |\nabla_{xy}f|.$$ For any edge $e=\{x,y\}\in E,$ we write $|\nabla_e f|=|\nabla_{xy}f|.$ 
For a function $f\in C(V)$ and $A\in \R,$ we say that $$|\nabla_e f|\to A,\ e\to\infty$$ if for all $\epsilon>0,$ there exists $N>0$ such that for all $e=\{x,y\}\in E$ satisfying $\min\{d(x,x_0),d(y,x_0)\}\geq N,$ $$||\nabla_{e}f|-A|<\epsilon.$$

\subsection{Bakry-\'Emery curvature}

In this subsection, we aim to prove that the space of bounded harmonic functions is of finite dimension if the Bakry-\'Emery curvature is nonnegative outside a ball (see Theorem~\ref{thm:mainBE}).

\begin{lemma}\label{lem:gradto01} Let $(G,x_0)\in \BG^p$ satisfy
$$\mathcal{K}^{G}_{\infty}(x)\geq 0,\quad \forall x\in V\setminus B_{R_0}(x_0),$$ for some $x_0\in V$ and $R_0\in \N.$ Then for any bounded harmonic function $u$ on $G,$ $$\Gamma(u)(x)\to 0,\quad x\to \infty.$$
\end{lemma}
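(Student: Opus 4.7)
The strategy is a compactness-plus-contradiction argument using the discrete Gromov--Hausdorff machinery of Section~\ref{Sec:Preliminaries}. Suppose the conclusion fails. Then there exist $c>0$ and a sequence $x_i\to\infty$ with $\Gamma(u)(x_i)\geq c$. Since $(G,x_0)\in\BG^p(C)$ for some $C>0$, each rooted graph $(G,x_i)$ also lies in $\BG^p(C)$, and the constant sequence $u_i:=u$ is uniformly bounded.

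Apply Theorem~\ref{thm:comp2} to $\{(G,x_i),u_i\}$. After passing to a subsequence we obtain a limit $(G_\infty,x_\infty)\in\BG^p(C)$ and a bounded $u_\infty\in C(V_\infty)$, together with, for each $R\in\N$, rooted graph isomorphisms $\varphi_{i,R}:(B_R^{G}(x_i),x_i)\to(B_R^{G_\infty}(x_\infty),x_\infty)$ such that $u\circ\varphi_{i,R}^{-1}\to u_\infty$ on $B_R^{G_\infty}(x_\infty)$. Since $\Delta$ and $\Gamma$ are finite weighted sums over one-step neighborhoods, the weight convergences \eqref{def:mcv} and \eqref{def:wcv} give, for every $y\in V_\infty$ and $R>d_{G_\infty}(y,x_\infty)$,
$$\Delta^{G_\infty}u_\infty(y)=\lim_{i\to\infty}\Delta^{G}u(\varphi_{i,R}^{-1}(y))=0,\qquad \Gamma^{G_\infty}(u_\infty)(y)=\lim_{i\to\infty}\Gamma^{G}(u)(\varphi_{i,R}^{-1}(y)).$$
In particular $u_\infty$ is harmonic on $G_\infty$ and $\Gamma^{G_\infty}(u_\infty)(x_\infty)\geq c>0$.

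Next I would show $G_\infty$ satisfies $\CCD(0,\infty)$ globally. Fix $y\in V_\infty$ and set $y_i:=\varphi_{i,R}^{-1}(y)$ for any $R\geq d_{G_\infty}(y,x_\infty)+2$; the choice of $R$ is irrelevant since the $\varphi_{i,R}$ are built to be extensions of one another in the proof of Theorem~\ref{thm:comp1}. Then $d_G(y_i,x_i)=d_{G_\infty}(y,x_\infty)$ is bounded while $d_G(x_i,x_0)\to\infty$, so for all sufficiently large $i$ we have $y_i\notin B_{R_0}(x_0)$ and hence $\mathcal{K}^{G}_\infty(y_i)\geq 0$. The shift $(G,y_i)\xrightarrow[]{pGH}(G_\infty,y)$ (which follows from the second item in the remark after Definition~2.2), together with Proposition~\ref{prop:lowerbound1}, yields $\mathcal{K}^{G_\infty}_\infty(y)\geq 0$. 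Since $y$ was arbitrary, $G_\infty$ satisfies $\CCD(0,\infty)$ everywhere.

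Finally, $G_\infty\in\BG$ satisfies $\CCD(0,\infty)$, so the Liouville theorem of \cite{hua2019Liouville} forces the bounded harmonic function $u_\infty$ to be constant; in particular $\Gamma^{G_\infty}(u_\infty)(x_\infty)=0$, contradicting $\Gamma^{G_\infty}(u_\infty)(x_\infty)\geq c>0$. The main conceptual step is realizing that the locality of both the $\Gamma$-calculus and the curvature condition makes them behave well under pGH convergence; once Theorems~\ref{thm:comp1}--\ref{thm:comp2} and Proposition~\ref{prop:lowerbound1} are in hand, the remaining work is purely bookkeeping, and the Liouville theorem on the limit graph closes the argument.
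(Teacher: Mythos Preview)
Your proposal is correct and follows essentially the same argument as the paper's proof: a contradiction via pointed Gromov--Hausdorff compactness (Theorem~\ref{thm:comp2}), passage of harmonicity and $\Gamma$ to the limit through \eqref{def:mcv}--\eqref{def:wcv}, verification that the limit graph has nonnegative Bakry--\'Emery curvature via Proposition~\ref{prop:lowerbound1}, and finally the Liouville theorem of \cite{hua2019Liouville}. The only cosmetic differences are that the paper works directly with the restricted isomorphism $\varphi_{i,R}|_{B_2^{G_i}(y_i)}$ (choosing $R=2d(x_\infty,y_\infty)+2$) rather than invoking the root-shifting remark, and your radius $R\geq d_{G_\infty}(y,x_\infty)+2$ is slightly smaller---both suffice.
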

\begin{proof} Suppose it is not true, then there exist $\{x_i\}_{i=1}^\infty\subset V,$ $x_i\to \infty,$ and a positive constant $c$ such that $\Gamma(u)(x_i)\geq c$ for all $i\in \N.$
Consider the sequence of rooted graphs $\{(G_i,x_i)\}_{i=1}^\infty$ with $G_i=G$ and functions $u_i:G_i\to\R,$ $u_i=u,$ for all $i\geq 1.$ By Theorem~\ref{thm:comp2}, there exist a subsequence, still denoted by $\{(G_i,x_i)\}_{i=1}^\infty,$ $(G_\infty,x_\infty)\in \BG^p,$ and $u_\infty:V_\infty\to\R,$ such that
$$(G_i,x_i)\xrightarrow[]{pGH}(G_\infty,x_\infty), \quad\mathrm{and} \quad u_i\to u_\infty,\quad i\to \infty.$$

For any $R\in \N,$ and sufficiently large $i,$ there exist rooted graph isomorphisms
$$\varphi_{i,R}:(B_R^{G_i}(x_i),x_i)\to(B_R^{G_\infty}(x_\infty),x_\infty),$$ and
$$u_i\circ \varphi_{i,R}^{-1}\to u_\infty,\quad \mathrm{on}\ B_R^{G_\infty}(x_\infty).
$$ For $R\geq 2,$ by the convergence of weights, \eqref{def:mcv} and \eqref{def:wcv},  we have
$$\Gamma^{G_i}(u_i)(x_i)\to \Gamma^{G_\infty}(u_\infty)(x_\infty),\quad i\to \infty.$$ Hence \begin{equation}\label{eq:pt1}\Gamma^{G_\infty}(u_\infty)(x_\infty)\geq c.\end{equation} Since $u_i$ are harmonic functions on $G_i,$ $u_\infty$
is harmonic on $G_\infty.$ Moreover, $\|u_\infty\|_\infty\leq \|u_i\|_\infty=\|u\|_\infty<\infty.$

 We claim that $G_\infty$ has nonnegative Bakry-\'Emery curvature everywhere. For any $y_\infty\in V_\infty,$ set $R=2d(x_\infty,y_\infty)+2.$ By the pointed Gromov-Hausdorff convergence for the scale $R$, setting $y_i=\varphi_{i,R}^{-1}(y_\infty),$ we get $$(B_2^{G_i}(y_i),y_i)\to(B_2^{G_\infty}(y_\infty),y_\infty),\quad i\to\infty,$$ where the rooted graph isomorphism is given by $\varphi_{i,R}\big|_{B_2^{G_i}(y_i)}.$ Since $x_i\to \infty$ in $G,$ $B_2^{G_i}(y_i)\cap B_{R_0}(x_0)=\emptyset$ for sufficiently large $i,$ which yields that $\mathcal{K}_{\infty}^{G_i}(y_i)\geq 0.$ By Proposition~\ref{prop:lowerbound1}, we obtain that
 $$\mathcal{K}_{\infty}^{G_\infty}(y_\infty)\geq 0.$$
 
 Hence $u_\infty$ is a bounded harmonic function on $G_\infty,$ which has nonnegative Bakry-\'Emery curvature everywhere. By the Liouville theorem in \cite{hua2019Liouville}, 
 we obtain that $u_\infty$ is constant. This yields that $\Gamma^{G_\infty}(u_\infty)(x_\infty)=0$ which contradicts \eqref{eq:pt1}.
This proves the lemma. 
\end{proof}

\begin{proof}[Proof of Theorem~\ref{thm:mainBE}]
For any bounded harmonic function $u$ on $G,$ we claim that
\begin{equation}\label{eq:BE1}\sup_{x\in V}\Gamma(u)(x)=\max_{x\in B_{R_0}}\Gamma(u)(x).\end{equation} By the curvature condition and the harmonicity of $u,$ 
$$\Gamma_2(u)(x)=\frac{1}{2}\Delta \Gamma(u)(x)\geq 0,\quad \forall x\in V\setminus B_{R_0}(x_0).$$ Hence $\Gamma(u)$ is a subharmonic function on $V\setminus B_{R_0}(x_0).$ By applying the maximum principle on $V\setminus B_{R_0}(x_0),$ noting that $\Gamma(u)(x)\to 0,\ x\to\infty,$ 
$$\sup_{x\in V\setminus B_{R_0}(x_0)}\Gamma(u)(x)=\max_{x\in S_{R_0}(x_0)}\Gamma(u)(x).$$ This proves the claim.

We denote by $$\mathcal{H}(B_{R_0}(x_0)):=\{f:B_{R_0+1}(x_0)\to \R: \Delta f(x)=0,\forall x\in B_{R_0}(x_0)\}$$ the space of harmonic functions on $B_{R_0}(x_0).$ By the maximum principle, $\mathcal{H}(B_{R_0}(x_0))$ is linearly isomorphic to $\R^{S_{R_0+1}(x_0)},$ the space of functions on $S_{R_0+1}(x_0).$ Hence $$\dim \mathcal{H}(B_{R_0}(x_0))=\sharp{S_{R_0+1}}(x_0).$$

We define a linear operator $$T:\mathcal{H}_0(G)\to \mathcal{H}(B_{R_0}(x_0)),\ f\mapsto f\big|_{B_{R_0+1}(x_0)}.$$ By \eqref{eq:BE1}, $T$ is injective. Hence $$\dim \mathcal{H}_0(G)\leq  \dim\mathcal{H}(B_{R_0}(x_0)).$$ This proves the theorem.
\end{proof}

\subsection{Ollivier curvature}

We now prove finite-dimensionality of bounded harmonic functions in case of nonnegative Ollivier curvature outside a ball (see Theorem~\ref{thm:mainOllivier}).
We first show that the gradient of a bounded harmonic function tends to zero by the Liouville property, similar to the Bakry-\'Emery curvature case.
After that, we will prove a maximum principle for the gradient of a harmonic function.

\begin{lemma}\label{lem:gradto02} Let $(G,x_0)\in \BG^p$ satisfy
$$\kappa(e)\geq 0,\quad \forall e\in E_{V\setminus B_{R_0}(x_0)},$$ for some $x_0\in V$ and $R_0\in \N.$ Then for any bounded harmonic function $u$ on $G,$ $$|\nabla_e u|\to 0,\quad e\to \infty.$$
\end{lemma}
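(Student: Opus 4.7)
The plan is to imitate the proof of Lemma~\ref{lem:gradto01}, replacing the vertex gradient $\Gamma(u)$ by the edge gradient $|\nabla_e u|$, invoking Proposition~\ref{prop:lowerbound2} in place of Proposition~\ref{prop:lowerbound1}, and closing with the Ollivier-curvature Liouville theorem of \cite{jost2019liouville, munch2019non}.

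Suppose the conclusion fails. Then there exist a constant $c>0$ and a sequence of edges $e_i=\{x_i,y_i\}\in E$ with $\min\{d(x_i,x_0),d(y_i,x_0)\}\to\infty$ and $|\nabla_{e_i}u|\geq c$ for all $i$. I regard $(G,x_i)$ as a sequence of rooted graphs in $\BG^p$ and $u_i:=u$ as a uniformly bounded sequence of harmonic functions on them. By Theorem~\ref{thm:comp2}, after passing to a subsequence I obtain a limit $(G_\infty,x_\infty)\in\BG^p$ with rooted graph isomorphisms $\varphi_{i,R}:(B_R^{G_i}(x_i),x_i)\to(B_R^{G_\infty}(x_\infty),x_\infty)$ for large $i$, and a bounded limit function $u_\infty:V_\infty\to\R$ such that $u_i\circ\varphi_{i,R}^{-1}\to u_\infty$ on $B_R^{G_\infty}(x_\infty)$ for every $R$.

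Since $y_i\sim x_i$ and $\varphi_{i,1}$ preserves adjacency, $\varphi_{i,1}(y_i)$ is a neighbor of $x_\infty$; as $\deg(x_\infty)<\infty$, after a further subsequence I may assume $\varphi_{i,1}(y_i)=y_\infty$ for a fixed $y_\infty\sim x_\infty$. The $R=1$ convergence gives $u_i(x_i)\to u_\infty(x_\infty)$ and $u_i(y_i)\to u_\infty(y_\infty)$, hence
\begin{equation*}
|\nabla_{x_\infty y_\infty}u_\infty|=|u_\infty(x_\infty)-u_\infty(y_\infty)|=\lim_{i\to\infty}|\nabla_{e_i}u|\geq c.
\end{equation*}
Harmonicity of $u_\infty$ on $G_\infty$ follows from the convergence \eqref{def:mcv}, \eqref{def:wcv} together with the bound on the degrees, since $\Delta u$ is a finite sum at each vertex.

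Next I check that $G_\infty$ has nonnegative Ollivier curvature on every edge. Given an edge $\{p,q\}$ in $G_\infty$, pick $R$ large enough that $B_3^{G_\infty}(p)\subset B_R^{G_\infty}(x_\infty)$ and set $p_i:=\varphi_{i,R}^{-1}(p)$, $q_i:=\varphi_{i,R}^{-1}(q)$. Restricting $\varphi_{i,R}$ to $B_3^{G_i}(p_i)$ yields $(B_3^{G_i}(p_i),p_i)\xrightarrow{pGH}(B_3^{G_\infty}(p),p)$. Because $d(p_i,x_i)$ is bounded in $i$ while $d(x_i,x_0)\to\infty$, the ball $B_3^{G_i}(p_i)$ is disjoint from $B_{R_0}(x_0)$ for large $i$, so every edge of $B_3^{G_i}(p_i)$ lies in $E_{V\setminus B_{R_0}(x_0)}$ and in particular $\kappa^{G_i}(p_i,q_i)\geq 0$. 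Proposition~\ref{prop:lowerbound2} then gives $\kappa^{G_\infty}(p,q)\geq 0$. Thus $G_\infty$ has nonnegative Ollivier curvature everywhere, so the Liouville theorem for bounded harmonic functions under nonnegative Ollivier curvature forces $u_\infty$ to be constant, contradicting $|\nabla_{x_\infty y_\infty}u_\infty|\geq c>0$.

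The main technical point is the stabilization step for the endpoint $y_i$: the isomorphisms $\varphi_{i,R}$ produced by Theorem~\ref{thm:comp2} need not a priori send $y_i$ to a single vertex of $V_\infty$, but the finiteness of $\deg(x_\infty)$ combined with a standard pigeonhole passage to a subsequence resolves this, after which the rest of the argument proceeds exactly in parallel with the Bakry-\'Emery case.
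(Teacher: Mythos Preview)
Your proof is correct and follows essentially the same approach as the paper's own proof: a contradiction argument via pointed Gromov--Hausdorff convergence, stabilization of the second endpoint $y_i$ by pigeonhole, passage of the Ollivier curvature lower bound to the limit via Proposition~\ref{prop:lowerbound2}, and the Liouville theorem of \cite{jost2019liouville,munch2019non}. Your write-up is in fact somewhat more explicit than the paper's (which simply refers back to the proof of Lemma~\ref{lem:gradto01}) in spelling out the pigeonhole step and the verification that $B_3^{G_i}(p_i)\cap B_{R_0}(x_0)=\emptyset$.
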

\begin{proof} Suppose that it is not true, then there exists $\{e_i\}_{i=1}^\infty\subset E,$ $e_i=\{x_i,y_i\},$ satisfying $d(x_i,x_0)\to \infty, i\to \infty,$ such that
$$|\nabla_{e_i} u|\geq c> 0.$$ Consider the sequence $\{(G_i,x_i)\}_{i=1}^\infty$ with $G_i=G$ and functions $u_i:V_i\to\R,$ $u_i=u,$ for all $i\geq 1.$ By Theorem~\ref{thm:comp2}, there exist a subsequence, still denoted by $\{(G_i,x_i)\}_{i=1}^\infty,$ $(G_\infty,x_\infty)\in \BG^p,$ and $u_\infty:V_\infty\to\R,$ such that
$$(G_i,x_i)\xrightarrow[]{pGH}(G_\infty,x_\infty), \quad\mathrm{and}\quad u_i\to u_\infty,\quad i\to \infty.$$ By passing to the subsequence, there exists $y_\infty\in V_\infty,$ $y_\infty\sim x_\infty,$ such that $\varphi_{i,R}(y_i)=y_\infty$ ($R\geq 2$) for sufficiently large $i.$ Hence $$|\nabla_{e_i}u_i|\to |\nabla_{x_\infty y_\infty}u_\infty|, i\to \infty,$$ which implies that $|\nabla_{x_\infty y_\infty} u_\infty|\geq c.$  As in the proof of Lemma~\ref{lem:gradto01}, one can show that $G_\infty$ is an infinite graph with nonnegative Ollivier curvature by Proposition~\ref{prop:lowerbound2} and $d(x_i,x_0)\to \infty, i\to \infty.$ Moreover,  $u_\infty$ is a bounded harmonic function on $G_\infty.$ Hence the Liouville theorem in \cite{jost2019liouville} implies that
$u_\infty$ is constant.  This contradicts $|\nabla_{x_\infty y_\infty} u_\infty|\geq c$
and proves the lemma.
\end{proof}

\begin{lemma}[Maximum principle for the gradient]\label{lem:MaxPrincipleOllivierGradient} Let $G=(V,E,m,w)$ be a weighted graph and $W \subset V$ be finite. Suppose $\kappa(x,y)\geq 0$ for all $x,y\in W\setminus \delta W$ with $y\sim x$. Let $u:V \to \R$ be harmonic on $W\setminus \delta W$. Then, 
\[
\max_{x\in W, y \sim x} |\nabla_{xy} u| = \max_{x\in \delta W,  y\sim x}|\nabla_{xy}u|
\]
where $\delta W := \{x \in W: x \sim y $ for some $ y \in V\setminus W\}$.
\end{lemma}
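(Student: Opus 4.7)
The plan is a proof by contradiction combining a Lipschitz extension of $u$ with the dual formula \eqref{eq:ofc} for the Ollivier curvature. Suppose
\[
M := \max_{x\in W,\, y\sim x}|\nabla_{xy}u| \;>\; M_\partial := \max_{x\in \delta W,\, y\sim x}|\nabla_{xy}u|,
\]
and rescale so that $M=1$. Any maximising edge $\{x_0,y_0\}$ must then have both endpoints in $W\setminus\delta W$: an edge with an endpoint in $V\setminus W$ has its other endpoint in $\delta W$ by definition, giving gradient at most $M_\partial<1$, and similarly for edges joining $W\setminus\delta W$ to $\delta W$. Fix such $\{x_0,y_0\}$ with $u(x_0)-u(y_0)=1$. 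Since $x_0,y_0\in W\setminus\delta W$, all their neighbours lie in $W$, so $A:=B_1(x_0)\cup B_1(y_0)\subset W$ and $u|_A$ is automatically 1-Lipschitz.

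I would then extend $u|_A$ to a 1-Lipschitz function $\tilde u$ on $V$ via the McShane--Whitney formula $\tilde u(z):=\inf_{w\in A}(u(w)+d(z,w))$. Then $\tilde u=u$ on $A$, so harmonicity of $u$ on $W\setminus\delta W$ yields $\Delta\tilde u(x_0)=\Delta u(x_0)=0=\Delta\tilde u(y_0)$, while $\nabla_{x_0 y_0}\tilde u=1=|\nabla\tilde u|_\infty$. Hence $\tilde u$ is admissible in \eqref{eq:ofc} for the directed edge $(y_0,x_0)$, and combined with $\kappa(y_0,x_0)\ge 0$ one obtains
\[
0 \le \kappa(y_0,x_0) \le \nabla_{y_0 x_0}\Delta\tilde u = 0,
\]
so $\tilde u$ realises the infimum in the curvature formula.

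The remaining and most delicate step exploits the minimality of $\tilde u$ via a perturbation. I consider $\tilde u\mapsto\tilde u+\epsilon\mathbf{1}_a$ for a neighbour $a$ of $x_0$ with $a\neq y_0$ and $a\not\sim y_0$. This preserves the affine constraint $\nabla_{x_0 y_0}f=1$, and a direct computation gives
\[
\nabla_{y_0 x_0}\Delta(\tilde u+\epsilon\mathbf{1}_a) \;=\; -\epsilon\,\frac{w(x_0,a)}{m(x_0)} \;<\;0\quad(\epsilon>0),
\]
which would violate $\kappa(y_0,x_0)\ge 0$ provided admissibility is preserved. Admissibility can fail only when some edge $\{a,z\}$ already satisfies $\tilde u(a)-\tilde u(z)=1$, and a short case analysis (on whether $z\in A$, or $z\notin A$ forcing the McShane infimum to be realised at a nearby Lipschitz-tight point) shows that in every such scenario a \emph{genuine} maximising edge of $u$ must be incident to $a$. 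Hence the maximum propagates from $\{x_0,y_0\}$ to a new edge incident to $a$; by the symmetric argument at $y_0$ and the finiteness of $W$, iterating this propagation ultimately produces a maximising edge incident to $\delta W$, contradicting $M_\partial<M$. The principal obstacle is precisely this propagation step: one must rule out that the McShane extension generates spurious unit-gradient edges outside $A$ which would block the perturbation without yielding genuine maximising edges of $u$.
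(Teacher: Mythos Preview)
Your setup is sound up through the point where you deduce $\nabla_{y_0x_0}\Delta\tilde u=0$: the set $A=B_1(x_0)\cup B_1(y_0)$ lies in $W$, and one checks that $u|_A$ is indeed $1$-Lipschitz with respect to $d_G$ (for $p,q\in A$ with $d_G(p,q)\le 2$ any geodesic has every edge touching $W$, and for $d_G(p,q)=3$ the path through $x_0,y_0$ works), so the McShane extension is legitimate. After that, however, the argument stalls in two places.

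First, the perturbation step is genuinely incomplete, as you acknowledge. In case $z\notin A$, all you obtain from $\tilde u(a)-\tilde u(z)=1$ is that the McShane infimum at $z$ is realised by some $w_0\in A$ with $u(a)-u(w_0)=1+d_G(z,w_0)$; this forces a $d_G$-geodesic from $a$ through $z$ to $w_0$ along which $\tilde u$ (not $u$) is tight, and it need not produce a genuine $u$-maximising edge at $a$. There is also a hidden existence issue: a neighbour $a\sim x_0$ with $a\neq y_0$ and $a\not\sim y_0$ may fail to exist (all neighbours of $x_0$ other than $y_0$ could be common neighbours of $y_0$), so the perturbation direction you write down may simply be unavailable.

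Second, even granting that every blocked perturbation yields a new $u$-maximising edge, ``finiteness of $W$'' does not by itself force the propagation to hit $\delta W$: the collection of maximising edges could live entirely inside a finite subset of $W\setminus\delta W$, with the propagation cycling among them. You need an extensive quantity that strictly increases at each step.

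The paper supplies exactly such a quantity. It works in the auxiliary graph $\widetilde G$ consisting of all edges meeting $W$, on which $u$ is globally $M$-Lipschitz, and takes a \emph{maximal-length} $\widetilde G$-geodesic $x_0,\dots,x_n$ along which $\nabla_{x_ix_{i+1}}u=M$. Nonnegative Ollivier curvature between the endpoints, combined with \cite[Lemma~2.3]{jost2019liouville}, then yields $x\sim x_0$ and $y\sim x_n$ with $\nabla_{xy}u=M$ and $d_{\widetilde G}(x,y)>n$; any $\widetilde G$-geodesic from $x$ to $y$ either stays in $W$ (contradicting maximality of $n$) or leaves $W$, producing a maximising edge at $\delta W$. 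The ``maximal length'' replaces your unstructured iteration with a quantity that must strictly grow, and the appeal to the cited lemma absorbs the delicate perturbation/transport analysis that your case split leaves open.
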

\begin{proof}
Let 
\[
M := \max_{x\in W, y \sim x} |\nabla_{xy} u|.
\]
We argue by contradiction. Suppose that the maximum is not attained at any boundary point $x \in \delta W$. Then the maximum is attained for some $x,y \in W \setminus \delta W$.
Let $\widetilde G$ be the graph consisting of all edges containing a vertex from $W$ with the inherited weights from $G.$ Then, $u$ is $M$-Lipschitz w.r.t the combinatorial graph distance of $\widetilde G$. Moreover, $\kappa(x,y) \geq 0$ within $\widetilde G$ for all $x\sim y \in W \setminus \delta W$.

Let $x_0,\ldots,x_n \in W$ be a geodesic w.r.t. $\widetilde G$ of maximal length s.t. $\nabla_{x_ix_{i+1}}u = M$. By assumption, we have $x_i \in W \setminus \delta W$. Since $\{x_i\}_i$ is a geodesic within $W$, we have $\kappa(x_0,x_n)\geq 0$. By \cite[Lemma~2.3]{jost2019liouville} applied to $\widetilde G$, there exist $x\sim x_0$ and $y \sim x_n$ s.t. $\nabla_{xy}u = M$ and $d_{\widetilde G}(x,y) > n$. 
As $x_0,x_n \in W \setminus \delta W$, we have $x,y \in W$.
Let $\gamma$ be a geodesic w.r.t. $\widetilde G$ from $x$ to $y$. If $\gamma \subset W$, then this is a contradiction to maximal length of $\{x_i\}_i$ as $d_{\widetilde G}(x,y)>n$. Thus, there exist $v\sim w$ as consecutive vertices of $\gamma$ with $v\notin W$ and $w \in \delta W$. Moreover, $|\nabla_{vw}u|=M,$ which contradicts that the maximal gradient is not attained at the boundary. This proves the lemma.
\end{proof}

With the maximum principle in hands, we can now show that the space of bounded harmonic functions has finite dimension.

\begin{proof}[Proof of Theorem~\ref{thm:mainOllivier}]
Let $u$ be a bounded harmonic function.
By Lemma~\ref{lem:MaxPrincipleOllivierGradient}
and Lemma~\ref{lem:gradto02}, {with $W=B_R(x_0)\setminus B_{R_0-1}(x_0)$ as $R\to \infty,$} we have
\[
\sup_{\substack{x\in V\setminus B_{R_0-1}(x_0)  \\ y\sim x}} |\nabla_{xy}u|= \max_{\substack{x\in S_{R_0}(x_0)\\y\sim x}} |\nabla_{xy}u|
\]
In particular, if $u$ vanishes on $B_{R_0+1}$, then it vanishes everywhere. Moreover by harmonicity, the function $u$ on $B_{R_0+1}(x_0)$ is uniquely determined by its values on $S_{R_0+1}(x_0)$. Similarly to the proof of Theorem~\ref{thm:mainBE}, this shows that
\[
\dim \mathcal{H}_0(G)\leq \sharp{S_{R_0+1}}(x_0).
\]
This finishes the proof.
\end{proof}

\section{Ends and bounded harmonic functions}
In this section, we develop the theory of harmonic functions on ends of graphs, see e.g. \cite{Li12} for the Riemannian setting. Let $G=(V,E,m,w)$ be a weighted graph. For any subset {$K\subset V,$} we denote by $\partial K:=\{y\in V\setminus K: d(x,K)=1\},$ where $d(x,K)=\inf_{z\in K}d(x,z),$ the vertex boundary of $K,$ and by $\overline{K}=K\cup \partial K.$ For any $\Omega\subset V,$ we write $1_\Omega$ as the indicator function on $\Omega.$
 A weighted graph $G$ is called \emph{non-parabolic} (or transient) if it admits a positive Green's function on $V.$ Otherwise, it is called \emph{parabolic} (or recurrent). It is well-known that $G$ is non-parabolic if and only if it admits a non-constant, positive, superharmonic function on $V.$ For any $\rho\in \N$ and $x_0,x_1\in V$ with $d(x_1,x_0)\leq\rho,$ let $\Gamma_\rho(\cdot,x_1)$ be the Green's function on $B_\rho(x_0)$ with Dirichlet boundary condition, i.e. 
\begin{equation}\label{eq:hh1}\left\{\begin{array}{l} \Delta \Gamma_{\rho}(\cdot,x_1)=-\frac{1}{m(x_1)}1_{x_1}(\cdot),\\
 \Gamma_{\rho}(\cdot,x_1)\big|_{\partial B_\rho(x_0)}=0.\end{array}\right.\end{equation}
By the maximum principle, $\Gamma_{\rho}(\cdot,x_1)$ is non-decreasing in $\rho,$ so that we define for any $x\in V,$ \begin{equation}\label{eq:co1}\Gamma(x,x_1)=\lim_{\rho\to\infty}\Gamma_{\rho}(x,x_1).\end{equation} As is well-known, $G$ is non-parabolic if and only if $\Gamma(x,x_1)<\infty$ for some (hence all) $x, x_1.$ 

For any finite $\Omega\subset V,$ we consider $V\setminus \Omega$ as the induced subgraph on $V\setminus \Omega$ with vertex and edge weights induced from $G.$ Any infinite connected component $\Pi$ of $V\setminus \Omega$ is called an end of $G$ w.r.t. $\Omega.$ We denote by $N_{\Omega}(G)$ the number of ends of $G$ w.r.t. $\Omega,$ which is finite since the graph $G$ is locally finite. In general, when we say $\Pi$ is an end we mean that $\Pi$ is an end of $G$ w.r.t. some finite $\Omega.$ One easily sees that for $\Omega_1\subset \Omega_2,$ $$N_{\Omega_1}(G)\leq N_{\Omega_2}(G).$$ The sequence of finite subsets $\{\Omega_i\}_{i=1}^\infty$ is called an exhaustion of $G$ if $\Omega_i\subset \Omega_{i+1},$ $i\in \N,$ and $V=\cup_{i=1}^\infty \Omega_i.$ We define
$$N(G)=\lim_{i\to\infty}N_{\Omega_i}(G)$$ as the number of ends of $G.$ Note that the above limit does not depend on the choice of the exhaustion, so that it is well-defined. If $N(G)<\infty,$ then there exists some finite $\Omega\subset V$ such that $N(G)=N_{\Omega}(G).$

For an end $\Pi,$ $\overline{\Pi}$ is regarded as the induced graph on $\overline{\Pi}$ with the inherited weights. 
\begin{defi} An end $\Pi$ is called non-parabolic if there exists $f:\overline{\Pi}\to (0,+\infty)$ satisfying $\Delta f=0$ on $\Pi,$ $f\big|_{\partial \Pi}\equiv 1$ and $$\liminf_{x\to \Pi(\infty)}f<1,$$ where $\liminf_{x\to \Pi(\infty)}$ is understood as $x\to \infty$ and $x\in \Pi.$ Here $f$ is called a barrier function on $\Pi.$ Otherwise, $\Pi$ is called parabolic.
\end{defi} 

One can show that $\Pi$ is non-parabolic if and only if $\overline{\Pi},$ as an induced subgraph, is non-parabolic. For a non-parabolic end $\Pi,$ by extending $f\equiv 1$ to $V\setminus \overline{\Pi},$ we get a non-constant superharmonic function on $V.$  For any $\rho\in \N,$ we denote $\Pi_\rho:=B_\rho\cap \Pi.$ Let $f_\rho$ satisfy 
\[\left\{\begin{array}{ll} \Delta f_{\rho}(x)=0,& x\in \Pi_\rho\\
f_\rho\big|_{\partial \Pi}=1,&\\
f_\rho\big|_{\partial B_\rho\cap \Pi}=0.&\\
\end{array}\right.\]  By the monotonicity of $f_\rho,$ we define $$f=\lim_{\rho\to\infty} f_\rho.$$ One easily checks that $f$ is a barrier function on $\Pi.$ In fact, $f$ is the minimal barrier function on $\Pi.$ By the maximum principle, for any $x_1\in V,$ 
there exists a constant $C$ such that \begin{equation}\label{eq:bcom}\Gamma(x,x_1)\leq C f(x),\quad \forall x\in \Pi.\end{equation} One can show that $G$ is non-parabolic if and only if for some (hence for all) finite $\Omega\subset V$ there exists a non-parabolic end w.r.t. $\Omega.$ 

Let $\Pi$ be an end w.r.t. $\Omega$ and $\Omega'$ be a finite subset containing $\Omega.$ Then one can prove the following: \begin{enumerate}
\item If $\Pi$ is non-parabolic, then there exists a non-parabolic end $\Pi'$ w.r.t. $\Omega'$ contained in $\Pi\setminus \Omega'.$
\item If $\Pi$ is parabolic, then all ends w.r.t. $\Omega'$ contained in $\Pi\setminus \Omega'$ are parabolic.
\end{enumerate} For any finite $\Omega\subset V,$ we denote by $N_{\Omega}^0(G)$ (resp.$N_{\Omega}'(G)$) the number of non-parabolic (resp. parabolic) ends of $G$ w.r.t. $\Omega.$ By the above property, we define
$$N^0(G)=\lim_{i\to\infty}N_{\Omega_i}^0(G),\ N'(G)=\lim_{i\to\infty}N_{\Omega_i}'(G)$$ as the number of non-parabolic and parabolic ends of $G$ respectively, where $\{\Omega_i\}$ is an exhaustion of $G.$ Here, they are independent of the choice of the exhaustion.

The following theorem is a discrete analog of \cite[Theorem~21.1]{Li12}.

\begin{thm}\label{thm:harm1} Let $G$ be a non-parabolic graph, $\Omega$ be a finite subset of $V$ and $x_0\in\Omega.$ Let {$g:V\to \R$} be  harmonic on $V\setminus\Omega.$ Then there exist a harmonic function $h$ on $V$ and a constant $C$ such that
$$|g(x)-h(x)|\leq C \Gamma(x,x_0),\quad \forall x\in V\setminus\Omega.$$ Moreover, if $g$ is bounded, then $\|h\|_{\infty}\leq \|g\|_{\ell^\infty(V\setminus\Omega)}.$
\end{thm}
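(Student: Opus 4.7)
The plan is to realize $h$ as $g$ minus a Green's potential of the source $\Delta g.$ Since $g$ is harmonic on $V\setminus\Omega,$ the function $\Delta g$ is supported on the finite set $\Omega,$ and non-parabolicity makes the limit in \eqref{eq:co1} finite, so
\[
\phi(x)\;:=\;-\sum_{y\in\Omega}\Delta g(y)\,m(y)\,\Gamma(x,y)
\]
is well defined on all of $V.$ Setting $h:=g-\phi,$ and using that $\Delta\Gamma(\cdot,y)=-m(y)^{-1}1_y$ on $V$---obtained by taking $\rho\to\infty$ in \eqref{eq:hh1}, noting that $\Delta$ involves only a finite local sum and that $\Gamma_\rho\to\Gamma$ pointwise---one gets $\Delta\phi=\Delta g$ on $V,$ so $h$ is harmonic everywhere.

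For the pointwise inequality, since $\Omega$ is finite it suffices to produce, for each $y\in\Omega,$ a constant $C_y$ with $\Gamma(x,y)\le C_y\,\Gamma(x,x_0)$ for all $x\in V\setminus\Omega.$ I would prove this at the finite-ball level first. For $\rho$ so large that $\Omega\subset B_\rho(x_0),$ the minimum principle applied to $\Gamma_\rho(\cdot,x_0)$ on $B_\rho(x_0)$---harmonic off $x_0$ and vanishing on $\partial B_\rho(x_0)$---forces $\Gamma_\rho(\cdot,x_0)>0$ on $B_\rho(x_0),$ so
\[
C_y^\rho\;:=\;\max_{z\in\partial\Omega}\,\frac{\Gamma_\rho(z,y)}{\Gamma_\rho(z,x_0)}
\]
is finite. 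Then $C_y^\rho\,\Gamma_\rho(\cdot,x_0)-\Gamma_\rho(\cdot,y)$ is harmonic on $B_\rho(x_0)\setminus\overline{\Omega},$ nonnegative on $\partial\Omega$ by construction, and zero on $\partial B_\rho(x_0);$ the discrete maximum principle on this finite domain yields $\Gamma_\rho(x,y)\le C_y^\rho\,\Gamma_\rho(x,x_0)$ on $B_\rho(x_0)\setminus\Omega.$ The monotone convergence $\Gamma_\rho(z,x_0)\uparrow\Gamma(z,x_0)>0$ on the finite set $\partial\Omega$ bounds $C_y^\rho$ uniformly for large $\rho;$ letting $\rho\to\infty$ gives $C_y$ and hence
\[
|g(x)-h(x)|\;\le\;\sum_{y\in\Omega}|\Delta g(y)|\,m(y)\,\Gamma(x,y)\;\le\;C\,\Gamma(x,x_0),\qquad x\in V\setminus\Omega,
\]
with $C:=\sum_{y\in\Omega}|\Delta g(y)|\,m(y)\,C_y.$

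For the moreover part I would use the Dirichlet approximants $h_\rho$ that are harmonic on $B_\rho(x_0)$ with $h_\rho|_{\partial B_\rho(x_0)}=g|_{\partial B_\rho(x_0)}.$ Then $g-h_\rho$ vanishes on $\partial B_\rho(x_0)$ and has Laplacian $\Delta g$ on $B_\rho(x_0),$ so the Green's representation gives $g-h_\rho=-\sum_{y\in\Omega}\Delta g(y)m(y)\Gamma_\rho(\cdot,y),$ which tends pointwise to $\phi$ as $\rho\to\infty$ (the sum is finite), identifying $\lim_\rho h_\rho=h.$ Since $\Omega$ is finite, $\partial B_\rho(x_0)\cap\Omega=\emptyset$ for $\rho$ large, so the ordinary maximum principle on $B_\rho(x_0)$ yields $\|h_\rho\|_{\ell^\infty(B_\rho(x_0))}\le\|g\|_{\ell^\infty(\partial B_\rho(x_0))}\le\|g\|_{\ell^\infty(V\setminus\Omega)};$ passing to the limit delivers $\|h\|_\infty\le\|g\|_{\ell^\infty(V\setminus\Omega)}.$

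The one delicate point is the uniform control of $C_y^\rho$ in $\rho:$ without bounded geometry, no Harnack inequality is available to compare two minimal Green's functions directly, so the argument leans on the elementary observation that $\Gamma_\rho(z,x_0)$ increases to a strictly positive limit on the finite set $\partial\Omega$ and is therefore bounded below uniformly for all large $\rho.$ Beyond this, the proof is discrete maximum-principle bookkeeping, modelled on the Riemannian argument of \cite[Theorem~21.1]{Li12}.
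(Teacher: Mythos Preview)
Your argument is correct and follows the paper's strategy closely: you write $h=g+\sum_{y\in\Omega}\Delta g(y)m(y)\Gamma(\cdot,y)$, verify harmonicity, compare Green's kernels to get the pointwise bound, and use the Dirichlet approximants $h_\rho$ for the $\ell^\infty$ estimate. The first and last steps are essentially identical to the paper's.

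The one genuine difference is how you obtain the comparison $\Gamma(\cdot,y)\le C_y\,\Gamma(\cdot,x_0)$. You run the maximum principle on the annulus $B_\rho(x_0)\setminus\overline{\Omega}$ at each finite scale and then control $C_y^\rho$ uniformly via the monotone convergence $\Gamma_\rho\uparrow\Gamma$ on the finite set $\partial\Omega$ (your remark that the numerator is bounded by $\Gamma(z,y)<\infty$ and the denominator from below by some $\Gamma_{\rho_0}(z,x_0)>0$ is exactly what is needed, so the ``delicate point'' you flag is in fact benign). The paper instead invokes the potential-theoretic fact that $\Gamma(y,\cdot)/\Gamma(y,y)$ is the \emph{minimal} positive superharmonic function with value $1$ at $y$, and compares it with $\Gamma(x,\cdot)/\Gamma(x,y)$; by symmetry in $x,y$ this yields $\Gamma(x,\cdot)/\Gamma(y,\cdot)\in\ell^\infty(V)$ in one line. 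The paper's route is shorter and gives the comparison on all of $V$ rather than just $V\setminus\Omega$, but relies on the minimality characterisation of the Green's function; your route is more elementary, needing only the finite-domain maximum principle, and is self-contained in that respect. Either way the conclusion for the theorem is the same.
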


\begin{proof}
We define
{\[
h:=g+\sum_{x \in V} \Delta g(x) \Gamma(x,\cdot) m(x).
\] Noting that $\Delta g$ has finite support in $\Omega,$ the above is a finite sum over $\Omega.$}
As $\Delta \Gamma(x,\cdot)m(x) = -1_x$, we obtain $\Delta h = 0$.
As $\frac{\Gamma(y,\cdot)}{\Gamma(y,y)}$ is the minimal positive superharmonic function attaining value one at $y$, and as $\frac{\Gamma(x,\cdot)}{\Gamma(x,y)}$ is also positive superharmonic and attaining value one at $y$, we get
\[
\frac{\Gamma(y,\cdot)}{\Gamma(y,y)} \leq \frac{\Gamma(x,\cdot)}{\Gamma(x,y)}
\] 
implying $\frac{\Gamma(x,\cdot)}{\Gamma(y,\cdot)} \in \ell_\infty(V)$.
Particularly,
\[
|h-g| \leq 
 \sum_{x \in \Omega} |\Delta g(x)| \Gamma(x,\cdot) m(x) \leq C\Gamma(x_0,\cdot)
\]
for some $C>0$. This proves the first assertion. 

{We now prove the second assertion. For any $\rho\geq 1,$ let $h_\rho$ solve the following problem
\[\left\{\begin{array}{ll} \Delta h_{\rho}(x)=0,& x\in B_\rho,\\
h_\rho\big|_{\partial B_\rho}=g.&\\
\end{array}\right.\] This yields that $\|h_\rho\|_\infty\leq\|g\|_{\ell^\infty(\partial B_\rho)}\leq \|g\|_{\ell^\infty(V\setminus\Omega)}.$ Note that $$h_\rho=g+\sum_{x \in V} \Delta g(x) \Gamma_\rho(x,\cdot) m(x).$$
Hence by \eqref{eq:co1}, $\lim_{\rho\to\infty} h_\rho=h.$} This proves the second assertion and finishes the proof.


\end{proof}

\begin{thm}\label{thm:bn1} Let $G$ be a non-parabolic weighted graph.There exists a subspace $\mathcal{K}_0(G)$ of $\mathcal{H}_0(G)$ such that
$$\dim \mathcal{K}_0(G)=N^0(G).$$
\end{thm}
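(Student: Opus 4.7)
The plan is to construct, for each of the $k := N^0(G)$ non-parabolic ends, a bounded harmonic function on $V$ built from the corresponding minimal barrier, and to verify these are linearly independent. I would begin by fixing a finite $\Omega \subset V$ containing a basepoint $x_0$ with $N_\Omega^0(G) = k$, enumerating the non-parabolic ends of $V \setminus \Omega$ as $E_1,\ldots,E_k$, and letting $f_i$ denote the minimal barrier function on $\overline{E_i}$.

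Next, I would introduce the model functions
\[
g_i(x) := \begin{cases} 1 - f_i(x), & x \in \overline{E_i},\\ 0, & x \in V \setminus \overline{E_i}, \end{cases}
\]
well-defined since $1-f_i \equiv 0$ on $\partial E_i$, and satisfying $0 \le g_i \le 1$. The key structural observation is that $\partial E_j \subset \Omega$ for every end $E_j$, since two adjacent vertices of $V \setminus \Omega$ must lie in the same connected component. Consequently, on each $E_j$ with $j \ne i$, both $g_i(x)$ and $g_i(y)$ at every neighbor $y$ of $x$ vanish (all such $y$ lie in $E_j \cup \partial E_j \subset V \setminus E_i$), while on $E_i$ itself $\Delta g_i = -\Delta f_i = 0$; hence $g_i$ is harmonic on $V \setminus \Omega$. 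Applying Theorem~\ref{thm:harm1} to each $g_i$ then yields $h_i \in \mathcal{H}_0(G)$ with $\|h_i\|_\infty \le 1$ and
\[
|h_i(x) - g_i(x)| \le C_i\, \Gamma(x,x_0),\qquad x \in V \setminus \Omega,
\]
and I would set $\mathcal{K}_0(G) := \operatorname{span}\{h_1, \ldots, h_k\}$.

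Linear independence is the crux. Assuming $\sum_{i=1}^k c_i h_i \equiv 0$ on $V$ and restricting to an end $E_l$, the identity $g_i|_{E_l} = (1-f_l)\,\delta_{il}$ gives
\[
|c_l\,(1 - f_l(x))| \le \sum_i |c_i|\,|g_i(x) - h_i(x)| \le C\, \Gamma(x,x_0) \le C'\,f_l(x),\quad x \in E_l,
\]
the last step being the barrier comparison \eqref{eq:bcom} on the non-parabolic end $E_l$. Evaluating along a sequence $\{x_n\} \subset E_l$ with $d(x_n, x_0) \to \infty$ and $f_l(x_n) \to 0$ would then force $c_l = 0$, completing the argument.

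The main obstacle is producing such a sequence, equivalently showing $\inf_{E_l} f_l = 0$ on each non-parabolic end. I plan to handle this by a minimality/rescaling trick: if $\alpha := \inf_{E_l} f_l > 0$, then $(f_l - \alpha)/(1-\alpha)$ would be a nonnegative harmonic function on $E_l$ equal to $1$ on $\partial E_l$ and strictly below $f_l$ wherever $f_l < 1$, contradicting the minimality of $f_l$ inherited from the exhaustion construction $f_l = \lim_\rho f_{l,\rho}$. Since the strong minimum principle on $\overline{E_l}$ (applied to $f_l - \alpha \ge 0$ with boundary value $1 - \alpha > 0$) rules out $f_l$ attaining its infimum at any interior vertex of $E_l$, the value $0$ can only be approached along a sequence escaping to infinity, which is exactly what is required.
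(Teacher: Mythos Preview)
Your proof is correct and follows essentially the same route as the paper's: build model functions supported on each non-parabolic end, correct them to global bounded harmonic functions via Theorem~\ref{thm:harm1}, and establish linear independence by evaluating along sequences in each end where the minimal barrier vanishes, invoking the comparison \eqref{eq:bcom}. The only substantive difference is your choice of model functions $g_i = (1-f_i)\,1_{\overline{E_i}}$ in place of the paper's indicator $g_i = 1_{\overline{\Pi_i}}$; your version has the mild advantage that $g_i$ vanishes identically on $\Omega$, so harmonicity on all of $V\setminus\Omega$ is immediate without worrying about vertices of $\Omega$ that lie in $\partial E_i \cap \partial E_j$ for $i\neq j$. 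You also supply an argument for $\inf_{E_l} f_l = 0$ via minimality of the barrier, which the paper simply asserts.

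One small gap: by fixing at the outset a single $\Omega$ with $N^0_\Omega(G) = N^0(G)$, you are implicitly assuming $N^0(G) < \infty$. The paper instead works with $B_\rho(x_0)$ for arbitrary $\rho$ and observes that the construction produces an $N^0_{B_\rho(x_0)}(G)$-dimensional subspace of $\mathcal{H}_0(G)$ for every $\rho$, whence the conclusion follows also when $N^0(G)=\infty$. Your argument adapts verbatim to that case once you replace the fixed $\Omega$ by an exhaustion.
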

\begin{proof} Since $G$ is non-parabolic, $N^0(G)\geq 1.$ For the case that $N^0(G)=1,$ it suffices to choose $\mathcal{K}_0(G)$ consisting of constant functions. Now we consider the case that $N^0(G)\geq 2.$ For any $\rho\in \N, $ let $\Pi_1,\Pi_2,\cdots, \Pi_{N},$ $N=N_{B_{\rho}(x_0)}^0(G),$ be non-parabolic ends w.r.t. $B_{\rho}(x_0).$ We want to construct an $N$-dimensional subspace of $\mathcal{H}_0(G).$ Since we can do that for arbitrary $\rho,$ the result follows.

For any $1\leq i\leq N,$ we define $g_i=1_{\overline{\Pi_i}}.$ By Theorem~\ref{thm:harm1},
there exist a harmonic function $h_i$ on $V$ and a constant $C_i$ such that
$$|g_i(x)-h_i(x)|\leq C_i \Gamma(x,x_0),\quad \forall x\in V\setminus B_{\rho}(x_0).$$
For any $\Pi_i,$ we denote by $f_i$ the minimal barrier function on $\Pi_i.$ Note that 
$$\liminf_{x\to \Pi_i(\infty)}f_i=0.$$ Hence, for any $1\leq i\leq N,$ there exists a sequence $\{x^i_l\}_{l=1}^{\infty}\subset \Pi_i$ such that $d(x^i_l, x_0)\to \infty, l\to \infty$ such that
$$\lim_{l\to \infty}f_i(x^i_l)=0.$$ By \eqref{eq:bcom}, there exists a large constant $C$ such that for any $1\leq i,j\leq N,$
$$|g_i(x)-h_i(x)|\leq C f_j(x),\quad \forall x\in \Pi_j.$$
Hence we obtain that
$$\lim_{l\to \infty}h_i(x^j_l)=\delta_{ij},$$ where $\delta_{ij}=1$ for $i=j,$ and $\delta_{ij}=0$ otherwise. Therefore $\{h_i\}_{i=1}^N$ is linearly independent. This proves the theorem.
\end{proof}

\begin{coro}\label{coro:ct1} For any weighted graph $G,$ $$N^0(G)\leq \dim \mathcal{H}_0(G).$$
\end{coro}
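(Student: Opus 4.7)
The corollary is an immediate consequence of Theorem~\ref{thm:bn1}, combined with the characterization of parabolicity given earlier in the section. The plan is to split into two cases based on whether $G$ is parabolic or not, and handle each directly.

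\textbf{Case 1: $G$ is parabolic.} From the remark that ``$G$ is non-parabolic if and only if for some (hence for all) finite $\Omega\subset V$ there exists a non-parabolic end w.r.t.\ $\Omega$,'' it follows that when $G$ is parabolic, we have $N_\Omega^0(G)=0$ for every finite $\Omega\subset V$, and so $N^0(G)=0$ by taking the limit along any exhaustion. Since $\dim\mathcal{H}_0(G)\geq 0$ trivially (it contains the constants, as $G$ is connected), the inequality holds.

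\textbf{Case 2: $G$ is non-parabolic.} Apply Theorem~\ref{thm:bn1} to obtain a subspace $\mathcal{K}_0(G)\subset \mathcal{H}_0(G)$ with $\dim\mathcal{K}_0(G)=N^0(G)$. Then
\[
N^0(G)=\dim\mathcal{K}_0(G)\leq \dim\mathcal{H}_0(G),
\]
which is the desired inequality.

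There is no real obstacle here: all of the substantive work, namely the construction of linearly independent bounded harmonic functions associated to the non-parabolic ends via the Green's function comparison \eqref{eq:bcom} and the decay of barrier functions along the respective ends, has been carried out in Theorem~\ref{thm:harm1} and Theorem~\ref{thm:bn1}. The only care needed in writing up the argument is the observation that parabolicity of $G$ forces $N^0(G)=0$, so that the inequality in the corollary is not vacuous but automatic in that regime.
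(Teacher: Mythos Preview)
Your proof is correct and follows essentially the same approach as the paper: split into the parabolic case (where $N^0(G)=0$ makes the inequality trivial) and the non-parabolic case (where Theorem~\ref{thm:bn1} gives a subspace $\mathcal{K}_0(G)\subset\mathcal{H}_0(G)$ of dimension $N^0(G)$). Your write-up simply spells out why the parabolic case is trivial, which the paper leaves implicit.
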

\begin{proof} If $G$ is parabolic, then the statement is trivial. If $G$ is non-parabolic, then by Theorem~\ref{thm:bn1},
$$N^0(G)\leq \dim \mathcal{K}_0(G)\leq\dim \mathcal{H}_0(G).$$ This proves the corollary.
\end{proof}

Now we are ready to prove Corollary~\ref{coro:finiteend}.
\begin{proof}[Proof of Corollary~\ref{coro:finiteend}] The corollary follows from Theorem~\ref{thm:mainBE} and Corollary~\ref{coro:ct1}.
\end{proof}

\bigskip
\bigskip

\textbf{Acknowledgements.} We thank the MPI MiS Leipzig and Fudan University for their hospitality.  B. Hua is supported by NSFC, no.11831004, and by Shanghai Science and Technology Program [Project No. 22JC1400100].

\bibliographystyle{alpha}
\bibliography{nonnegative_outside_no_spectra}

\end{document}